\documentclass[12pt,leqno]{article}

\newif\ifsattoc\sattoctrue
\newread\testfl\immediate\openin\testfl=\jobname.toc
    \ifeof\testfl\sattocfalse\fi\closein\testfl

\usepackage[lite]{amsrefs}
\usepackage{latexsym,enumerate}

  \usepackage[notref, notcite]{}
\usepackage{amssymb, amsfonts, eucal,  amsmath,amsthm}
\usepackage{graphicx,float}
\usepackage[list=true]{subcaption}

\DeclareCaptionSubType*[arabic]{figure}
\captionsetup[subfigure]{labelformat=simple}
\captionsetup[figure]{labelformat=simple}

\usepackage{color}
\definecolor{added}{rgb}{0, 0, 1}
\definecolor{deleted}{rgb}{1, 0, 0}

\setlength{\oddsidemargin}{.3in}
\setlength{\evensidemargin}{\oddsidemargin}
\setlength{\topmargin}{-.5in} \setlength{\headheight}{0.0in}
\setlength{\headsep}{0.0in} \setlength{\textwidth}{6.2in}
\setlength{\textheight}{9.5in} \setlength{\footskip}{.39in}

\newtheorem{theorem}{Theorem}[section]

\newtheorem{lemma}[theorem]{Lemma}

\newtheorem{remark}[theorem]{Remark}

\newtheorem{corollary}[theorem]{Corollary}

\newtheorem{openproblem}[theorem]{Open Problem}

\newcommand{\sect}[1]{\section{#1} \setcounter{equation}{0} }

\newcounter{ca}
\setcounter{ca}{1}

\newcommand{\norm}[1]{\left\|#1\right\|}
\newcommand{\EE}{\widetilde E}
\newcommand{\dn}{\delta_n}

\newcommand{\Poly}{\mathbb P}
\newcommand{\Pn}{\Poly_n}
 \newcommand{\ec}{\end{comment}}
\newcommand{\bc}{ \begin{comment}
 }

\newcommand{\opp}{\scalebox{0.75}{$\overset{\oplus}{+}$}}
\newcommand{\opm}{\scalebox{0.75}{$\overset\oplus\ominus$}}

\newcommand{\NN}{{\mathcal N}}

\newcommand{\andd}{\quad\mbox{\rm and}\quad}

\newcommand\w{{\omega}}
\def\be  {\begin{equation}}
\def\ee  {\end{equation}}
\def\ba  {\begin{eqnarray}}
\def\ea  {\end{eqnarray}}
\def\baa {\begin{eqnarray*}}
\def\eaa {\end{eqnarray*}}
\newenvironment{comment}[2]
{\bgroup\vspace{7pt}
\begin{tabular}{|p{5in}|}
\hline \qquad \bf \footnotesize Comment -- to be deleted in the final version \\
\hline
\quad\sl\footnotesize #1#2} {\\ \hline \end{tabular}
\vspace{7pt}\indent\egroup}

\def\updots{\mathinner{\mkern
1mu\raise 1pt \hbox{.}\mkern 2mu \mkern 2mu \raise
4pt\hbox{.}\mkern 1mu \raise 7pt\vbox {\kern 7 pt\hbox{.}}} }

\def \esssup{\mathop{\rm ess\: sup}\nolimits}

\newcommand{\C}{C}

\newcommand{\R}{\mathbb R}

\newcommand{\N}{\mathbb N}

\newcommand{\Y}{\mathbb Y}

\renewcommand{\a}{\alpha}

\newcommand{\ineq}[1]{{\rm(\ref{#1})}}

\newcommand{\ie}{{\em i.e.}}
\newcommand{\eg}{{\em e.g. }}

\newcommand{\bpic}{
\begin{center}
}

\newcommand{\epic}{
\endpspicture
\end{center}
}


\newcommand{\st}{\;  :  \;}

\newcommand{\thm}[1]{Theorem~\ref{#1}}
\newcommand{\lem}[1]{Lemma~\ref{#1}}
\newcommand{\cor}[1]{Corollary~\ref{#1}}

\newcommand{\sectio}[1]{Section~\ref{#1}}




\title{{\sc Uniform and pointwise shape preserving approximation (SPA) by algebraic polynomials: an update}\thanks{{\it AMS classification:} 41A10, 41A17, 41A25. {\it Keywords
and phrases:} Approximation by polynomials,  moduli of smoothness, constrained approximation}}

\author{K. A.  Kopotun\thanks{Department of Mathematics, University of
Manitoba, Winnipeg, Manitoba, R3T 2N2, Canada ({\tt
kirill.kopotun@umanitoba.ca}). Supported by NSERC of Canada.} ,
D. Leviatan\thanks{Raymond and Beverly Sackler School of Mathematical
Sciences, Tel Aviv University, Tel Aviv 6139001, Israel ({\tt
 leviatan@tauex.tau.ac.il}).}\ \ and I. A. Shevchuk\thanks
{Faculty of Mechanics and Mathematics, Taras
Shevchenko National University of Kyiv, 01033 Kyiv, Ukraine ({\tt
shevchuk@univ.kiev.ua}).} }

\begin{document}

\maketitle

\abstract{
It is not surprising that one should expect that the degree of constrained (shape preserving) approximation
be worse than the degree of unconstrained approximation. However, it turns out that, in certain cases, these degrees are the same.

The main purpose of this paper is to provide an update to our 2011 survey paper.  In particular, we discuss recent uniform estimates in comonotone approximation, mention recent developments and state several open problems in the (co)convex case,   and reiterate that co-$q$-monotone approximation with $q\ge 3$ is completely different from comonotone and coconvex cases.

Additionally, we show that, for each function $f$ from $\Delta^{(1)}$, the set of all monotone functions on $[-1,1]$,  and every $\alpha>0$, we have

\[
\limsup_{n\to\infty}
\inf_{P_n\in\Pn\cap\Delta^{(1)}}\norm{ \frac{n^\a(f-P_n)}{\varphi^\alpha} }
\le c(\a) \limsup_{n\to\infty}  \inf_{P_n\in\Pn}\norm{ \frac{n^\a(f-P_n)}{\varphi^\alpha} }
\]
 where $\Pn$ denotes the set of algebraic polynomials of degree $<n$,
$\varphi(x):=\sqrt{1-x^2}$, and $c=c(\alpha)$ depends only on $\alpha$.

 }

\sect{Introduction}\label{secintro}

The main purpose of this paper is to provide an update to our paper \cite{KLPS}.

Let $C[-1,1]$ be the space of continuous functions on $[-1,1]$
equipped with the uniform norm $\|\cdot\|$ (since there is no confusion, we will use the same notation for the $\esssup$-norm of $L_\infty$ functions), and denote by
$\Delta^{(q)}$  the set of all $q$-monotone functions $f\in C[-1,1]$. In
particular, $\Delta^{(1)}$ and $\Delta^{(2)}$
are, respectively, the sets of nondecreasing and convex
functions which are continuous on $[-1,1]$. If $\Pn$ is the space of
algebraic polynomials of degree $<n$, then
\begin{eqnarray*}
E_n(f)=\inf_{P_n\in\Pn}  \|f-P_n\|  \andd
E_n^{(q)}(f)=\inf_{P_n\in\Pn\cap\Delta^{(q)}} \|f-P_n\|
\end{eqnarray*}
denote, respectively, the errors of best unconstrained and $q$-monotone approximation of a function $f$ by polynomials from $\Pn$.

In the sequel,   $c=c(\dots)$ denotes positive constants which   depend only on the parameters inside the parentheses and are otherwise absolute. Note that all constants $c$'s are assumed to be different even if they appear in on the same line.

Given a function $f\in\Delta^{(q)}$, it is clear that $E_n(f)\le E_n^{(q)}(f)$ and, in 1969,  Lorentz and Zeller \cite{LZ} proved that the inverse inequality $E_n^{(q)}(f)\le c
E_n(f)$  is not true in general even with the constant $c=c(f,q)$ that depends on   $f$. Specifically,
they proved that there exists an $f\in\Delta^{(q)}$ for which
\[
\limsup_{n\to\infty}\frac{E_n^{(q)}(f)}{E_n(f)}=\infty.
\]
Moreover, if $f\in C^{(q)}[-1,1]$, then it is easy to prove that there exists a constant $c=c(q)$ such
that
$$
E_n(f)\le\frac c{n^q}E_{n-q}(f^{(q)}).
$$
It is even easier to prove that, for $f\in\Delta^{(q)}\cap C^{(q)}[-1,1]$,
\be\label{qmon}
E_n^{(q)}(f)\le\frac2{q!}E_{n-q}(f^{(q)}).
\ee
However, comparing the last two estimates we see that we have lost an order of $n^q$.

It turns out (see \cite{LS}) that there exists a constant $c=c(q)>0$ such that, for each $n>q$, there is a non-polynomial $f=f_n\in\Delta^{(q)}\cap C^{(q)}[-1,1]$, for which
\[
E_n^{(q)}(f)\ge cE_{n-q}(f^{(q)}).
\]
Thus, \ineq{qmon} may not, in general, be improved.

Despite all this, it is known (see \eg  \cite{KLPS}*{p. 53}) that,
for every   $\a>0$ and $f\in\Delta^{(q)}$, $q=1,2$, we have
\be\label{compare}
\sup \{ n^\a E_n^{(q)}(f) \st  n\ge1\} \le c(\a) \sup\{ n^\a E_n(f) \st  n\ge1\}.
\ee
Note that estimates of this type  are, in general, invalid if $q\ge 3$, and we discuss this in more detail in \sectio{lastsection}. Hence, we only concentrate on the cases $q=1$ and $q=2$ in this section.

A natural question is whether similar results are valid for
piecewise monotone and piecewise convex functions, \ie,  functions
which are allowed to change their monotonicity or convexity
$s<\infty$ times in the interval $(-1,1)$.

In order to give precise statements we need some additional
definitions. 
Let $\Y_s$, $s\in\mathbb N$, be the set of all
collections $Y_s:=\big\{y_i\big\}_{i=1}^s$ of points $y_i$, such
that $-1<y_s<\cdots<y_1<1$. We augment the set by $y_{s+1}:=-1$ and
$y_0:=1$. For $Y_s\in\Y_s$ denote by $\Delta^{(q)}(Y_s)$
the set of all  piecewise $q$-monotone functions $f\in\C[-1,1]$ that change $q$-monotonicity at the points in $Y_s$. More precisely, $f\in \Delta^{(q)}(Y_s)$ iff
$f$ is $q$-monotone in the intervals
$[y_{2i+1},y_{2i}]$, $0\le i\le \lfloor s/2 \rfloor$, and $-f$ is $q$-monotone in the intervals $[y_{2i},y_{2i-1}]$, $1\le i\le \lfloor (s+1)/2 \rfloor$.
In particular, for $q=1,2$, these are all $f\in\C[-1,1]$  that change monotonicity/convexity
 at the points in $Y_s$, and are nondecreasing/convex on $[y_1,1]$.
For convenience, we also include the case of $q$-monotone functions (that is, $s=0$) in this notation by putting $Y_0:=\emptyset$. Hence,
$\Delta^{(q)}  = \Delta^{(q)}(Y_0)$,  $E_n^{(q)}(f) = E_n^{(q)}(f,Y_0)$, etc.

Denote by
\[
E_n^{(q)}(f,Y_s)=\inf_{P_n\in\Pn\cap\Delta^{(q)}(Y_s)} \|f-P_n\|
\]
the error  of best co-$q$-monotone approximation of a
function $f\in \Delta^{(q)}(Y_s)$.

Rewriting \ineq{qmon} for the new notions, the question now is whether the inequality
\be\label{12mon}
\sup \{ n^\a E_n^{(q)}(f,Y_s) \st  n\ge1\} \le c(\a,s)\sup\{ n^\a E_n(f) \st  n\ge1\}
\ee
is valid or not.

Surprisingly, the answer is rather different for $q=1$ and $q=2$ (see \cite{KLPS}*{Section 15} for references).
If $q=2$, then \ineq{12mon} is valid if $s=1$ and $\a \in (0,4)\cup (4, \infty)$, and is invalid in all other cases (\ie, if $s\ge 2$ or if $s=1$ and $\a=4$).
If $q=1$, then \ineq{12mon} is valid if  $\a>0$ and $\a \not\in A_s$, where
\be\label{exceptional}
A_s:=\left\{\alpha \st \alpha=j, \; 1\le j\le s-1,\quad \text{or}\quad \alpha=2j, \; 1\le j\le s \right\},
\ee
and is invalid if $\a \in A_s$.

 We emphasize that \ineq{12mon} becomes valid for all $s\ge 1$ and $q=1,2$ if its left-hand side is replaced by
$ \sup \{ n^\a E_n^{(q)}(f,Y_s) \st  n\ge \NN^* \}$ with $\NN^*$ depending on $\a$ and $Y_s$ (see \cite{KLPS}*{Section 15} for detailed discussions).

The next natural question is whether similar results are valid for pointwise estimates, but we only concentrate on the case $s=0$ since
 analogous results with $s\ge 1$ have still not been completely resolved, while some conclusions can be made from the known pointwise results (see \eg \cite{KLPS}*{Tables 21, 22, 27 and 28}).

Let
\be\label{phi}
\varphi(x):=\sqrt{1-x^2}, \quad \dn(x) := \varphi(x) + n^{-1}  \andd \rho_n(x) := n^{-1} \dn(x) ,
\ee
and denote
\[
E_{n,\alpha}(f):=\inf_{P_n\in\Pn}\norm{ \frac{f-P_n}{\varphi^\alpha} }, \quad
\EE_{n,\alpha}(f):=\inf_{P_n\in\Pn}\norm{ \frac{f-P_n}{\dn^\alpha} }
\]
\[
E^{(q)}_{n,\alpha}(f):=\inf_{P_n\in\Pn\cap\Delta^{(q)}}\norm{ \frac{f-P_n}{\varphi^\alpha} } \andd
\EE^{(q)}_{n,\alpha}(f):=\inf_{P_n\in\Pn\cap\Delta^{(q)}}\norm{ \frac{f-P_n}{\dn^\alpha} } .
\]

Clearly, for $\a>0$,
\[
\EE_{n,\alpha}(f) \le E_{n,\alpha}(f) \le E^{(q)}_{n,\alpha}(f) \andd
\EE_{n,\alpha}(f) \le  \EE^{(q)}_{n,\alpha}(f)    \le E^{(q)}_{n,\alpha}(f) .
 \]

Note   that we usually refer to estimates involving
$E_{n,\alpha}(f)$ and $E^{(q)}_{n,\alpha}(f)$ as ``interpolatory results'' since it is necessary for approximating polynomials $P_n$ to interpolate $f$ at $\pm 1$ in order for these quantities to be finite.

Now, for every   $\a>0$ and $f\in\Delta^{(q)}$, $q=1,2$, it follows from  \eg \cite{KLPS}*{Tables 6 and 7} and \lem{modulus} below (with $\NN=1$) that
\be\label{pointnotints0}
\sup \{ n^\a \EE_{n,\a}^{(q)}(f) \st  n\ge1\} \le c(\a)\sup\{ n^\a \EE_{n,\a}(f) \st  n\ge1\} .
\ee

Interpolatory estimates are different. First, it follows from
  \cite{dy} ($q=1$) and \cite{L} ($q=2$) that the following inequality  is valid if $f\in\Delta^{(q)}$, $q=1,2$,  and $\a\in (0,2)$.
\be\label{pointints0}
\sup \{ n^\a E_{n,\a}^{(q)}(f) \st  n\ge1\} \le c(\a)\sup\{ n^\a E_{n,\a}(f) \st  n\ge1\} ,
\ee
and the following lemma implies that \ineq{pointints0} is not valid if $\a>2$. Note that it is still an open question if, for $q=1,2$,  \ineq{pointints0} is valid  if $\a=2$.

\begin{lemma}[see \eg \cite{kls2}*{(1.5)} if $q=1$ and \cite{kls-interconvex} if $q=2$] \label{thmneg1}
Let $q=1$ or $q=2$. For any $r\in\N$ and each $n\in\N$, there is a   function $f\in\C^{(r)}[-1,1]\cap \Delta^{(q)}$, such that for every   polynomial $P_n\in\Pn\cap\Delta^{(q)}$ and any positive on $(-1,1)$ function $\psi$ such that $\lim_{x\to \pm 1} \psi(x)=0$, either
\be \label{glswineqcon}
\limsup_{x\to -1} \frac{|f(x)-P_n(x)|}{\varphi^2(x) \psi(x)} = \infty \quad \mbox{\rm or}\quad
\limsup_{x\to 1} \frac{|f(x)-P_n(x)|}{\varphi^2(x)\psi(x)} = \infty .
\ee
\end{lemma}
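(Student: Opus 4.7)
\emph{The plan.} I begin by reducing \ineq{glswineqcon} to a Hermite interpolation problem at the endpoints. On the one hand, if $(f-P_n)/\varphi^2$ tends to $0$ at both $\pm 1$, then the function $\psi(x):=\max\{|f(x)-P_n(x)|/\varphi^2(x),\varphi(x)\}$ is positive on $(-1,1)$, vanishes at $\pm 1$, and satisfies $|f-P_n|/(\varphi^2\psi)\le 1$ near both endpoints, so neither $\limsup$ in \ineq{glswineqcon} is infinite for this $\psi$. On the other hand, if some admissible $\psi$ keeps both $\limsup$'s finite, then $(f-P_n)/\varphi^2\to 0$ at $\pm 1$ (since $\psi\to 0$). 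Because $f\in C^{(r)}[-1,1]$ with $r\ge 1$, Taylor expansion of $g:=f-P_n$ gives $g(1-t)=g(1)-t g'(1)+o(t)$, so $(f-P_n)/\varphi^2\to 0$ at $x=1$ if and only if $P_n(1)=f(1)$ and $P_n'(1)=f'(1)$, and analogously at $x=-1$. The lemma is therefore equivalent to the assertion that, for each $n$ and $r$, there exists $f=f_{n,r}\in C^{(r)}[-1,1]\cap\Delta^{(q)}$ such that no $P_n\in\Pn\cap\Delta^{(q)}$ simultaneously interpolates $f$ and $f'$ at both endpoints $\pm 1$.

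For $q=1$, set $S:=P_n'$ (a polynomial of degree $\le n-2$). Then $P_n\in\Delta^{(1)}$ is equivalent to $S\ge 0$ on $[-1,1]$, and the interpolation requirements become $S(\pm 1)=f'(\pm 1)$ together with $\int_{-1}^1 S(x)\,dx=f(1)-f(-1)$. I introduce
\[
m_n:=\min\left\{\int_{-1}^1 S(x)\,dx\st S\in\Pn,\ \deg S\le n-2,\ S\ge 0\text{ on }[-1,1],\ S(\pm 1)=1\right\}.
\]
The minimum is attained and strictly positive by finite-dimensional compactness: nonnegative polynomials of a fixed degree satisfy $\|S\|_\infty\le c_n\int S$, so any minimizing sequence is bounded and admits a convergent subsequence, and $\int S=0$ would force $S\equiv 0$, contradicting $S(\pm 1)=1$. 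I then build $f_{n,r}\in C^{(r)}\cap\Delta^{(1)}$ by antidifferentiating a nonnegative $f'(x)=\varepsilon+(1-\varepsilon)g(x)$, where $g\in C^{(r-1)}$ satisfies $0\le g\le 1$, $g(\pm 1)=1$, $g\equiv 0$ on $[-\tfrac12,\tfrac12]$, and $\varepsilon>0$ is small enough that $\int_{-1}^1 f'<m_n$. Any $P_n\in\Pn\cap\Delta^{(1)}$ interpolating $f,f'$ at $\pm 1$ would then yield a nonnegative $S=P_n'$ of degree $\le n-2$ with $S(\pm 1)=1$ and $\int S<m_n$, a contradiction.

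The case $q=2$ is handled analogously with $S:=P_n''$ (degree $\le n-3$), which must be nonnegative on $[-1,1]$; the interpolation conditions now translate into the two moment equalities $\int_{-1}^1 S(t)\,dt=f'(1)-f'(-1)$ and $\int_{-1}^1(1-t)S(t)\,dt=f(1)-f(-1)-2f'(-1)$. By the same compactness argument, the image of the map $S\mapsto(\int S,\int(1-t)S(t)\,dt)$ over nonnegative polynomials $S$ of degree $\le n-3$ is a proper closed subcone of $\{(A,B)\st 0\le B\le 2A\}$; in particular, for each $A>0$ the attainable $B$'s are bounded away from $0$ and from $2A$ by positive constants depending on $n$ (indeed, $B=0$ would force $(1-t)S(t)\equiv 0$, hence $S\equiv 0$). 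One then constructs a convex $C^{(r)}$ function $f$ whose endpoint pair $(f'(1)-f'(-1),\ f(1)-f(-1)-2f'(-1))$ lies outside this attainable cone. The main obstacle in both cases is the explicit construction of the shape-preserving $C^{(r)}$ function $f$ with the prescribed endpoint data (large derivatives at $\pm 1$ combined with a small total increment), which forces $f$ to be very steep near the endpoints and nearly flat in the middle; a careful mollification of a piecewise template does the job, while the positivity/infeasibility input is supplied by the compactness arguments above.
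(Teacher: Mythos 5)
The paper itself does not prove Lemma~\ref{thmneg1}; it defers to \cite{kls2}*{(1.5)} and \cite{kls-interconvex}. Your argument is correct in outline and follows the same strategy as those references: reduce \ineq{glswineqcon} to the statement that no $P_n\in\Pn\cap\Delta^{(q)}$ can Hermite-interpolate $f$ and $f'$ at both endpoints (your equivalence via $\psi=\max\{|f-P_n|/\varphi^2,\varphi\}$ is a clean way to dispose of the quantifier over $\psi$), and then exhibit an obstruction coming from the fact that nonnegative polynomials of bounded degree cannot concentrate their mass at the endpoints. The cited papers make the key quantity explicit (a nonnegative $S\in\Pn$ with $S(\pm1)=1$ has $\int_{-1}^1 S\ge c\,n^{-2}$, which is your $m_n$), whereas you get positivity of $m_n$ by compactness; since the lemma is for each fixed $n$, that is enough.

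One spot needs tightening in the $q=1$ construction. As written, you fix $g$ with $g\equiv 0$ on $[-\tfrac12,\tfrac12]$ and then shrink only $\varepsilon$; but $\int_{-1}^1 f'\to\int_{-1}^1 g$ as $\varepsilon\to 0$, and $\int_{-1}^1 g$ can be as large as $1$, while $m_n$ is of order $n^{-2}$. So for large $n$ the condition $\int_{-1}^1 f'<m_n$ cannot be met by choosing $\varepsilon$ alone: you must also choose the template $g$ (which is allowed to depend on $n$) concentrated near $\pm1$ so that $\int_{-1}^1 g<m_n$ first, and then take $\varepsilon$ small. The same remark applies to the coconvex-free convex case $q=2$, where the pair $(A_f,B_f)=\bigl(f'(1)-f'(-1),\,f(1)-f(-1)-2f'(-1)\bigr)$ must be pushed outside the attainable cone by concentrating $f''$ within distance less than $c_n$ of $x=1$; this is easy (a smooth bump for $f''$ supported in $[1-\delta,1]$ with $\delta<c_n$ gives $B_f\le\delta A_f$), but it should be said. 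With these adjustments the proof is complete, and it is essentially the argument of the cited sources.
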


In fact, it follows from \lem{thmneg1} that, in the case $q=1,2$, even the estimate
\be\label{pointnotints0new}
\sup \{ n^\a E_{n,\a}^{(q)}(f) \st  n\ge \NN^*\} \le c(\a)\sup\{ n^\a E_{n,\a}(f) \st  n\ge1\}
\ee
is not valid in general if $\a>2$ and $\NN^*$ is any natural number which is independent of $f$.

It turns out that, if $q=1$ and $\NN^*$ is allowed to depend on $f$ then \ineq{pointnotints0new} is valid for any $f\in\Delta^{(1)}$ and all $\a>0$, and it is still an open problem if
the same conclusion can be made in the case $q=2$ (see also Open Problem~\ref{openconvex} below).

In fact, the following stronger result  holds.

\begin{theorem} \label{pointnew}
Given a function $f\in\Delta^{(1)}$, $\alpha>0$ and $\NN\in\N$, there exists a constant $\NN^*=\NN^*(\a,f,\NN)\in\N$ such that
\be\label{pointnotints0newmain}
\sup \{ n^\a E_{n,\a}^{(1)}(f) \st  n\ge \NN^*\} \le c(\a)\sup\{ n^\a \EE_{n,\a}(f) \st  n\ge\NN\} .
\ee
\end{theorem}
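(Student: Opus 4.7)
The plan is to invoke the already-established non-interpolatory comonotone result \ineq{pointnotints0} (or a variant of it valid for hypotheses starting at $n\ge \NN$) to produce monotone polynomials, and then convert these into polynomials that interpolate $f$ at $\pm 1$, while retaining the sharper $\varphi^\alpha$-weighted estimate near the endpoints. Set $M:=\sup\{n^\alpha \EE_{n,\alpha}(f): n\ge\NN\}$ and assume $M<\infty$ (else the claim is vacuous). Applying the quoted result, for each $n\ge \NN$ one obtains $Q_n\in \Pn\cap \Delta^{(1)}$ satisfying
\[
|f(x)-Q_n(x)|\le c(\alpha) M n^{-\alpha}\dn^\alpha(x), \quad x\in[-1,1],
\]
and in particular $|f(\pm 1)-Q_n(\pm 1)|\le c(\alpha) M n^{-2\alpha}$. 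On the interior region $\{x: \varphi(x)\ge 1/n\}$ one has $\dn(x)\le 2\varphi(x)$, so the $\varphi^\alpha$-weighted estimate is automatic there; only the endpoint strips $\{x:\varphi(x)<1/n\}$ require further work.

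The constant case $f(1)=f(-1)$ being trivial, assume $f(1)>f(-1)$. For $n$ exceeding a threshold depending on $f$ (so that $|f(\pm 1)-Q_n(\pm 1)|$ is much smaller than $f(1)-f(-1)$), one has $Q_n(1)-Q_n(-1)\ge (f(1)-f(-1))/2$, so one may set
\[
\lambda_n:=\frac{f(1)-f(-1)}{Q_n(1)-Q_n(-1)},\quad \mu_n:=f(1)-\lambda_n Q_n(1), \quad R_n(x):=\lambda_n Q_n(x)+\mu_n.
\]
Then $R_n\in\Pn\cap\Delta^{(1)}$ (since $\lambda_n>0$), $R_n(\pm 1)=f(\pm 1)$, and $|\lambda_n-1|=O(Mn^{-2\alpha}/(f(1)-f(-1)))$; a direct computation yields $|f(x)-R_n(x)|\le c(\alpha,f)Mn^{-\alpha}\dn^\alpha(x)$ for all $x\in [-1,1]$. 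Thus the $\varphi^\alpha$-weighted estimate persists on the interior region.

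The final step, which is the main obstacle, is to bound $|f(x)-R_n(x)|$ by $c(\alpha,f) Mn^{-\alpha}\varphi^\alpha(x)$ on the endpoint strips. Consider $x$ near $1$ (the argument at $-1$ is symmetric). Since $R_n(1)=f(1)$ and both $f$ and $R_n$ are nondecreasing, one checks that
\[
|f(x)-R_n(x)|\le \max\bigl\{f(1)-f(x),\; R_n(1)-R_n(x)\bigr\},
\]
so it suffices to bound each of $f(1)-f(x)$ and $R_n(1)-R_n(x)$ by $c(\alpha,f) Mn^{-\alpha}(1-x)^{\alpha/2}$. For the function term, one invokes an inverse theorem (of Ditzian--Totik type) to convert the hypothesis $\EE_{n,\alpha}(f)=O(n^{-\alpha})$ into a local pointwise smoothness estimate of the form $f(1)-f(1-t)\le c(f,\alpha) t^{\alpha/2}$ valid for $t\in [0,t_0(f,\alpha)]$; it is precisely this conversion, whose constants depend on $f$, that forces $\NN^*$ to depend on $f$. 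For the polynomial term, one writes $R_n(1)-R_n(x)=(1-x)T(x)$ with $T$ a polynomial of degree $<n-1$ and estimates $T$ by combining the pointwise Bernstein--Markov inequality with the uniform proximity of $R_n$ to $f$ (together with the just-established smoothness of $f$ at $1$). Combining both estimates completes the endpoint analysis, and the symmetric argument at $-1$ finishes the proof.
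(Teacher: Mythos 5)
Your setup (extracting $Q_n\in\Pn\cap\Delta^{(1)}$ from the non-interpolatory estimate, the affine renormalization $R_n=\lambda_nQ_n+\mu_n$, and the interior region where $\dn\le2\varphi$) is fine, but the endpoint analysis breaks down, and it breaks down precisely in the regime $\a\ge2$ that is the actual new content of the theorem (for $0<\a<2$ the paper simply cites DeVore--Yu). The inequality $|f(x)-R_n(x)|\le\max\{f(1)-f(x),\,R_n(1)-R_n(x)\}$ is correct but discards exactly the cancellation the conclusion requires. Take any smooth monotone non-polynomial $f$ with $f'(1)>0$ (say $f(x)=e^x$); then $M:=\sup\{n^\a\EE_{n,\a}(f)\st n\ge\NN\}$ is finite, while $f(1)-f(x)\sim f'(1)(1-x)\sim\frac12f'(1)\varphi^2(x)$ as $x\to1$. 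At the edge of the strip, where $\varphi(x)=1/n$, the target bound $cMn^{-\a}\varphi^\a(x)$ equals $cMn^{-2\a}$, whereas your majorant is of exact order $n^{-2}$; the required inequality $n^{-2}\le cMn^{-2\a}$ fails as $n\to\infty$ for every $\a>1$. Consistently with this, your intermediate claim $f(1)-f(1-t)\le c(f,\a)t^{\a/2}$ is simply false for $\a>2$ (for $f(x)=e^x$ it would force $et\le ct^{\a/2}$ as $t\to0$), and even where such an estimate does hold it lacks the factor $Mn^{-\a}$ that the target $cMn^{-\a}(1-x)^{\a/2}$ demands. The same objection applies to the polynomial term: in $R_n(1)-R_n(x)=(1-x)T(x)$ one has $T(1)=R_n'(1)$, which need not be small, so this term is also of order $1-x$.

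The underlying obstruction is that any $P_n\in\Pn$ satisfying $|f-P_n|\le cMn^{-\a}\varphi^\a$ with $\a\ge2$ must match not only $f(\pm1)$ but also $f'(\pm1)$ (and, for larger $\a$, higher derivatives) at the endpoints; your affine correction only fixes the two values. This is why the paper does not argue this way. Its proof for $\a\ge2$ first applies the Dzyadyk-type inverse result (\lem{modulus}) to convert the hypothesis $n^\a\EE_{n,\a}(f)\le M$, $n\ge\NN$, into $f\in C^{(r)}[-1,1]$ together with $\omega_2(f^{(r)},t)\le cMt^{\b}+c(N)t^2E_{r+2}(f)$, where $\a=r+\b$, $\b\in(0,2)$, and then invokes the direct interpolatory theorem for monotone approximation (\thm{pointwise}, from \cite{kls2}), which supplies, for $n\ge\NN(f,r)$, monotone polynomials obeying $|f(x)-P_n(x)|\le c(\varphi(x)/n)^r\omega_2(f^{(r)},\varphi(x)/n)$; the delicate endpoint matching is performed inside that construction. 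That ingredient, or something of comparable depth, cannot be replaced by the elementary rescaling-plus-monotonicity argument you propose.
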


\thm{pointnew} with $0<\alpha<2$, $\NN=1$ and $\NN^*=1$  was proved by DeVore and Yu (see \cite{dy}*{Theorem 2}).
If $\a \ge 2$, then \thm{pointnew} is a corollary of \thm{pointwise} and \lem{modulus} (see \sectio{sect2}).

 \thm{pointnew} immediately implies the following.

\begin{corollary}\label{point} For each function $f\in\Delta^{(1)}$ and every $\alpha>0$ we have
\be \label{corin}
\limsup_{n\to\infty}n^\alpha E_{n,\alpha}^{(1)}(f)\le c(\a) \limsup_{n\to\infty}n^\alpha \EE_{n,\alpha}(f) .
\ee
\end{corollary}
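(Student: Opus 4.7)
The plan is to deduce Corollary~\ref{point} directly from \thm{pointnew} via the standard passage from a uniform tail bound to a $\limsup$ bound; no new analysis is needed beyond recognizing that \thm{pointnew} already allows the comparison index $\NN$ in the unconstrained estimate to be chosen \emph{after} one sees how quickly $n^\alpha \EE_{n,\alpha}(f)$ approaches its $\limsup$.

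Set $M := \limsup_{n\to\infty} n^\alpha \EE_{n,\alpha}(f)$. If $M = \infty$, the inequality \eqref{corin} holds trivially, so I would assume $M < \infty$. For an arbitrary $\varepsilon > 0$, the definition of $\limsup$ supplies an $\NN = \NN(\varepsilon, f, \alpha) \in \N$ such that
\[
n^\alpha \EE_{n,\alpha}(f) \le M + \varepsilon \quad\text{for all } n \ge \NN,
\]
and in particular $\sup\{ n^\alpha \EE_{n,\alpha}(f) : n \ge \NN \} \le M + \varepsilon$.

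Next I would apply \thm{pointnew} with this $\NN$ (noting that $f \in \Delta^{(1)}$ and $\alpha > 0$ are exactly the hypotheses required there) to obtain an integer $\NN^* = \NN^*(\alpha, f, \NN)$ for which
\[
\sup \{ n^\alpha E_{n,\alpha}^{(1)}(f) : n \ge \NN^* \} \le c(\alpha) \sup \{ n^\alpha \EE_{n,\alpha}(f) : n \ge \NN \} \le c(\alpha)(M + \varepsilon).
\]
Since $\limsup_{n\to\infty} n^\alpha E_{n,\alpha}^{(1)}(f)$ does not exceed this tail supremum, I conclude $\limsup_{n\to\infty} n^\alpha E_{n,\alpha}^{(1)}(f) \le c(\alpha)(M+\varepsilon)$, and letting $\varepsilon \to 0^+$ yields \eqref{corin}.

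There is no genuine obstacle here: the entire content of the corollary is already packaged inside \thm{pointnew}, and the derivation is a one-paragraph $\limsup$ argument. The only thing to be careful about is that the integer $\NN^*$ produced by \thm{pointnew} is permitted to depend on $f$ (and on the chosen $\NN$), which is precisely what makes the $\limsup$ reduction legitimate and is exactly why the corresponding statement for $\sup_{n \ge 1}$ cannot be deduced by the same route.
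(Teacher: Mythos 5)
Your proof is correct and is exactly the ``immediate'' deduction the paper has in mind (the paper states that \thm{pointnew} immediately implies \cor{point} and gives no further argument). You correctly identify the one point that matters: the constant $c(\alpha)$ in \thm{pointnew} is independent of $\NN$, while $\NN^*$ is allowed to depend on $f$ and $\NN$, which is what lets the $\varepsilon$-and-$\limsup$ reduction go through.
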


We emphasize that the right-hand sides of \ineq{pointnotints0newmain} and \ineq{corin} are given in terms of ``non-interpolatory'' quantities $\EE_{n,\alpha}(f)$ which are smaller than ``interpolatory'' $E_{n,\alpha}(f)$. In other words, we obtain interpolatory estimates for monotone approximation with the same order as non-interpolatory unconstrained estimates.

This paper is organized as follows. In \sectio{sect2}, we discuss interpolatory pointwise estimates for monotone approximation and the inverse result that will yield \thm{pointnew}.
\sectio{comsec}   summarizes  recent results  in the comonotone case. Several open problems for (co)convex approximation are stated and briefly discussed in \sectio{convecsection}.
Finally, we show in \sectio{lastsection} that estimates of the above type for co-$q$-monotone with $s\ge1$ and $q\ge3$ are, in general, invalid.

\sect{Pointwise estimates and proof of \thm{pointnew}}\label{sect2}

The following direct interpolatory pointwise result for monotone approximation by algebraic polynomials was recently proved in \cite{kls2}  (in fact, Theorem~1.2 in \cite{kls2} is a slightly stronger than we we state here).

\begin{theorem}\label{pointwise} For every $r\ge1$, there is a constant $c(r)$ such that, for each function $f\in\Delta^{(1)} \cap C^{(r)}[-1,1]$, there are a number $\NN=\NN(f,r)$
and a sequence $\{P_n\}_{n\ge \NN}$ of polynomials $P_n\in\Pn\cap\Delta^{(1)}$, satisfying
\be\label{101}
|f(x)-P_n(x)|\le c(r)\left(\frac{\varphi(x)}n\right)^r\omega_2\left(f^{(r)},\frac{\varphi(x)}n\right),\quad x\in[-1,1] .
\ee
\end{theorem}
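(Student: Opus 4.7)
The plan is to construct $P_n$ as the antiderivative of a nonnegative polynomial approximating $f'$. Since $f\in\Delta^{(1)}\cap C^{(r)}[-1,1]$, the derivative $f'$ is nonnegative on $[-1,1]$ and belongs to $C^{(r-1)}[-1,1]$. It suffices to produce, for each $n\ge\NN$, a polynomial $T$ of degree less than $n-1$ satisfying three conditions: (a) $T\ge 0$ on $[-1,1]$; (b) $\int_{-1}^1 T(t)\,dt=f(1)-f(-1)$; and (c) the pointwise bound $|f'(t)-T(t)|\le c(r)(\varphi(t)/n)^{r-1}\omega_2(f^{(r)},\varphi(t)/n)$ for all $t\in[-1,1]$. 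Setting $P_n(x):=f(-1)+\int_{-1}^x T(t)\,dt$ then yields a polynomial of degree less than $n$ lying in $\Delta^{(1)}$ by (a) and interpolating $f$ at $\pm 1$ by (b); the pointwise bound \eqref{101} follows from (c) by integration and a Hardy-type weighted argument exploiting the vanishing of $\varphi$ at $\pm 1$.

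To construct $T$, first invoke a classical Gopengauz-type interpolatory direct theorem in unconstrained approximation to obtain $\widetilde T$ of degree less than $n-1$ satisfying (c) but neither (a) nor (b). Next, set $T(t):=\widetilde T(t)+\lambda_n\pi(t)$, where $\pi$ is a fixed nonnegative polynomial of appropriate degree with $\int_{-1}^1\pi\,dt=1$ and $\pi(t)\le c\,\varphi^{2}(t)$ pointwise, and $\lambda_n$ is the scalar forcing (b). The integral deficit $f(1)-f(-1)-\int_{-1}^1\widetilde T\,dt$ is of size $O(n^{-r}\omega_2(f^{(r)},1/n))$, so $\lambda_n$ is small and the bound (c) persists, up to a constant depending only on $r$, after the modification.

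The main obstacle is enforcing (a) simultaneously with (b): the correction $\lambda_n\pi$ must pointwise dominate the negative excursions of $\widetilde T$ on $[-1,1]$. Near the endpoints $\pm 1$ this is automatic because both the right-hand side of (c) and $\pi$ itself are $O(\varphi^{2})$. In the interior, however, $f'$ can vanish, and one must argue that $\widetilde T$ is not too negative there. Using the $C^{(r)}$-smoothness of $f$ to control $f'$ in a neighborhood of each of its zeros, combined with $\|\widetilde T-f'\|\to 0$ as $n\to\infty$, one obtains that $\widetilde T$ is nonnegative in the interior of $[-1,1]$ for every $n$ exceeding a threshold $\NN=\NN(f,r)$. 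This is precisely the source of the $f$-dependence of $\NN$ in the theorem statement and the reason why a uniform-in-$f$ threshold is not accessible by this approach.
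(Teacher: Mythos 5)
The paper does not prove Theorem~\ref{pointwise} at all: it is quoted from \cite{kls2}, where the proof is long and proceeds by quite different means (a monotone piecewise polynomial intermediate approximant built on a Chebyshev-type partition, followed by a delicate polynomial construction), not by integrating an approximant of $f'$. Your reduction to $f'$ breaks down already at the integration step. With $P_n(x)=f(-1)+\int_{-1}^xT(t)\,dt$ one has $f(x)-P_n(x)=\int_{-1}^x(f'(t)-T(t))\,dt$, and conditions (a)--(c) genuinely do not imply \ineq{101}: condition (c) permits $T-f'$ to keep a constant sign on, say, $[-1,0]$ (condition (b) only forces cancellation over the whole interval), in which case at $x=0$ the error is comparable to $\int_{-1}^0(\varphi(t)/n)^{r-1}\omega_2(f^{(r)},\varphi(t)/n)\,dt\asymp n^{-(r-1)}\omega_2(f^{(r)},1/n)$, whereas \ineq{101} requires $n^{-r}\omega_2(f^{(r)},1/n)$ there. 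In general you lose a factor of order $n\varphi(x)$ at every $x$ that is not within $O(n^{-2})$ of $\pm1$, and no weighted Hardy-type inequality recovers it. What the antiderivative approach actually requires is smallness of $\int_a^b(f'-T)$ over \emph{every} subinterval, which is not a consequence of a Gopengauz-type pointwise estimate for $f'$ and is essentially as hard as the statement being proved.

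The nonnegativity repair is also unsound. The scalar $\lambda_n$ is completely determined by (b), so it is not free to enforce (a) as well, and it may be negative. More importantly, the assertion that $\widetilde T\ge 0$ in the interior for all large $n$ is false whenever $f'$ has an interior zero $t_0$ (which is exactly the hard case of monotone approximation): (c) only gives $\widetilde T(t_0)\ge -c(\varphi(t_0)/n)^{r-1}\omega_2(f^{(r)},\varphi(t_0)/n)$, and a near-best unconstrained approximant to $f'$ will in general dip below zero near $t_0$ for every $n$. Lifting it would require a correction of size about $n^{-(r-1)}\omega_2(f^{(r)},1/n)$ at $t_0$, while even by your own estimate $\lambda_n\pi(t_0)=O(n^{-r}\omega_2(f^{(r)},1/n))$ --- again short by a factor of $n$. (The claimed size of the integral deficit is itself unjustified: (c) only yields $O(n^{-(r-1)}\omega_2(f^{(r)},1/n))$ for $\int_{-1}^1(f'-\widetilde T)$.) So under this scheme both the shape constraint and the error bound fail, and the dependence of $\NN$ on $f$ in the theorem has a different source than the one you identify.
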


Now, \thm{pointnew} follows immediately from \thm{pointwise} and the following lemma which
 easily follows from the classical Dzyadyk inverse theorem for approximation by algebraic polynomials (see \eg  \cite{ds}*{p. 381}   for the references).

\begin{lemma}\label{modulus}
Suppose that $r\in \N_0$,  $\alpha\in(0,2)$ and $\NN\in\N$. If, for $f:[-1,1]\mapsto\R$
and every $n\ge \NN$, there is a polynomial $P_n\in\Pn$  such that
\be\label{inverse}
|f(x)-P_n(x)|\le\rho^{r+\alpha}_n(x),\quad x\in[-1,1],
\ee
then $f\in C^{(r)}[-1,1]$ and
\[
\omega_2(f^{(r)},t)\le c(r,\alpha)t^{\alpha}+c(r,  N)t^2 E_{r+2}(f).
\]
\end{lemma}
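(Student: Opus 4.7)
The plan is to carry out a Dzyadyk-type inverse theorem argument in three steps: (i) upgrade the weighted pointwise approximation hypothesis to a pointwise estimate on $r$-th derivatives via a Bernstein-type inequality, (ii) telescope to establish $f\in C^{(r)}[-1,1]$ together with a uniform derivative error rate, and (iii) convert that rate into a second-order modulus bound via the classical Stechkin inverse inequality.

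For step (i), the key tool is the weighted Bernstein--Dzyadyk inequality (see \cite{ds}): for $Q\in\Pn$ with $|Q(x)|\le\rho_n(x)^\beta$ on $[-1,1]$ one has $|Q^{(k)}(x)|\le c(k)\rho_n(x)^{\beta-k}$. From the hypothesis, $|P_{2n}(x)-P_n(x)|\le 2\rho_n(x)^{r+\alpha}$ for $n\ge\NN$ (using $\rho_{2n}\le\rho_n$), so applying the inequality to $P_{2n}-P_n\in\Poly_{2n}$ gives $|P_{2n}^{(r)}(x)-P_n^{(r)}(x)|\le c(r)\rho_n(x)^\alpha$. For step (ii), since $\|f-P_m\|_\infty\le(2/m)^{r+\alpha}\to 0$, the telescoping expansion $f-P_n=\sum_{k=0}^\infty(P_{2^{k+1}n}-P_{2^kn})$ converges uniformly, and so does its term-wise $r$-th derivative series because its $k$-th summand is bounded by $c\rho_{2^kn}(x)^\alpha$ with geometric ratio $2^{-\alpha}$, and the total is dominated by $c(r,\alpha)\rho_n(x)^\alpha$. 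This simultaneously yields $f\in C^{(r)}[-1,1]$ and the pointwise bound $|f^{(r)}(x)-P_n^{(r)}(x)|\le c(r,\alpha)\rho_n(x)^\alpha$ for $n\ge\NN$, and hence $E_n(f^{(r)})\le c(r,\alpha)n^{-\alpha}$ for $n\ge\NN$.

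For step (iii), let $Q^*\in\Poly_{r+2}$ realize $\|f-Q^*\|=E_{r+2}(f)$ and set $L:=(Q^*)^{(r)}\in\Poly_2$. Since $L$ is linear, $\omega_2(f^{(r)},t)=\omega_2(f^{(r)}-L,t)$; moreover $E_n(f^{(r)}-L)=E_n(f^{(r)})$ for $n\ge 2$ and $E_n(f^{(r)}-L)\le\|(f-Q^*)^{(r)}\|$ otherwise. The classical Stechkin inverse inequality $\omega_2(g,1/m)\le\frac{c}{m^2}\sum_{n=0}^m(n+1)E_n(g)$ applied to $g=f^{(r)}-L$, with the sum split at $\NN$ and using $\alpha<2$ so that $\sum_{n=\NN}^m n^{1-\alpha}\le c(\alpha)m^{2-\alpha}$, produces $\omega_2(f^{(r)},t)\le c(r,\alpha)t^\alpha+c(r,\NN)\|(f-Q^*)^{(r)}\|t^2$. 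Finally, $\|(f-Q^*)^{(r)}\|\le\|(f-P_\NN)^{(r)}\|+\|(P_\NN-Q^*)^{(r)}\|\le c(r,\alpha)\NN^{-\alpha}+c(r)\NN^{2r}\bigl(E_{r+2}(f)+\NN^{-(r+\alpha)}\bigr)$ by step (ii) and Markov's inequality applied to $P_\NN-Q^*$; this is $\le c(r,\NN)E_{r+2}(f)$ up to a constant absorbed into the leading $c(r,\alpha)t^\alpha$ piece for $t\le 1$ (and trivially handled for $t\ge 1$). The main technical hurdle is step (i): the uniform Markov inequality alone would lose a factor $n^r$ at the endpoints and break the geometric convergence of the $r$-th derivative telescoping series whenever $r\ge\alpha$, so the $\rho_n$-weighted Bernstein--Dzyadyk inequality is essential.
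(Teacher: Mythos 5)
Your steps (i) and (ii) are sound: they amount to re-deriving the Timan--Dzyadyk pointwise estimate $|f^{(r)}(x)-P_n^{(r)}(x)|\le c(r,\alpha)\rho_n^{\alpha}(x)$ for $n\ge\NN$, which is the right currency. The gap is in step (iii). The Stechkin inverse inequality $\omega_2(g,1/m)\le cm^{-2}\sum_{n=0}^m(n+1)E_n(g)$ is a theorem about \emph{trigonometric} approximation of periodic functions; for best \emph{algebraic} approximation on $[-1,1]$ paired with the ordinary modulus $\omega_2$ it is false, because algebraic polynomials approximate better near $\pm1$ than the ordinary modulus can detect. For instance, $g(x)=(1-x)^{3/4}$ has $E_n(g)\asymp n^{-3/2}$ but $\omega_2(g,t)\asymp t^{3/4}$, so the claimed inequality would force $m^{-3/4}\lesssim m^{-3/2}$. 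In general, the uniform rate $E_n(f^{(r)})\le c\,n^{-\alpha}$ that you extract at the end of step (ii) cannot yield better than $\omega_2(f^{(r)},t)=O(t^{\alpha/2})$: by passing from the pointwise bound $c\rho_n^{\alpha}(x)$ to its sup-norm consequence you discard exactly the endpoint information needed to recover the exponent $\alpha$. The correct tool is the pointwise Dzyadyk inverse theorem (\thm{inversethm} with $k=2$, $\phi(u)=u^\alpha$), but that requires the hypothesis for all $n\ge1$ --- which returns you to the actual difficulty of the lemma, namely the range $1\le n<\NN$.

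There is a second, independent problem in your endgame: the bound $\norm{(f-Q^*)^{(r)}}\le c(r,\alpha)\NN^{-\alpha}+c(r)\NN^{2r}\bigl(E_{r+2}(f)+\NN^{-(r+\alpha)}\bigr)$ contains additive terms not proportional to $E_{r+2}(f)$, so after multiplying by the $\NN^2t^2$ factor coming from the truncated sum you are left with an extra summand $c(r,\alpha,\NN)\,t^2$. Absorbing it into the leading term, as you propose, turns $c(r,\alpha)t^\alpha$ into $c(r,\alpha,\NN)t^\alpha$, which is strictly weaker than the statement: the whole point of the lemma is that only the coefficient of $t^2E_{r+2}(f)$ may depend on $\NN$. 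The paper's proof removes both obstacles at once with a short trick. Let $R_{r+2}\in\Poly_{r+2}$ be the best uniform approximant of $f$ and let $R_\NN$ be whichever of $P_\NN$ and $R_{r+2}$ has the smaller weighted error $\norm{(f-\cdot)\,\rho_\NN^{-r-\alpha}}$. Then $g:=f-R_\NN$ satisfies $|g-Q_n|\le\rho_n^{r+\alpha}$ for \emph{all} $n\ge1$ (with $Q_n:=P_n-R_\NN$ for $n\ge\NN$ and $Q_n\equiv0$ otherwise), so the classical Dzyadyk inverse theorem gives $\omega_2(g^{(r)},t)\le c(r,\alpha)t^\alpha$ with no $\NN$-dependence; and since $R_{r+2}^{(r+2)}=0$, the minimality of $R_\NN$ together with the Dzyadyk inequality for derivatives of polynomials bounds $\norm{R_\NN^{(r+2)}}$ by $c(r)\NN^{2r+4}E_{r+2}(f)$ exactly, with no additive constant, whence $\omega_2(R_\NN^{(r)},t)\le t^2\norm{R_\NN^{(r+2)}}$ supplies precisely the second term of the lemma.
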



\begin{proof}  Without loss of generality assume that $\NN> r+2$.
Denote by $R_{r+2}\in\Poly_{r+2}$ the polynomial of best uniform approximation of the function $f$. Put $R_\NN:=P_\NN$, if
\[
\norm{ (f-P_\NN)\rho_\NN^{-r-\alpha} }  \le \norm{ (f-R_{r+2})\rho_\NN^{-r-\alpha} } ,
\]
otherwise put $R_\NN:=R_{r+2}$ .
Also, let
\[
g:=f-R_\NN,\quad Q_n:=P_n-R_\NN\quad\text{for}\quad n\ge \NN, \quad \text{and}\quad Q_n\equiv0 \quad\text{for}\quad1\le n<\NN.
\]
Then, for all $n\ge1$,
\[
|g(x)-Q_n(x)|\le\rho^{r+\alpha}_n(x),\quad x\in[-1,1],
\]
and so the classical Dzyadyk inverse theorem  implies that $g\in C^{(r)}[-1,1]$ and
\[
\omega_2(g^{(r)},t)\le c(r,\alpha)t^{\alpha}.
\]
By the Dzyadyk inequality for the derivatives of algebraic polynomials (see \eg \cite{ds}*{(7.1.11)} and note that the constant $M$ there actually depends on $\lfloor s \rfloor$), we have
\[
\norm{(R_\NN^{(r+2)}-R_{r+2}^{(r+2)})\rho_\NN^{2-\alpha}}
\le c_1 \norm{ (R_\NN-R_{r+2})\rho_\NN^{-r-\alpha} } ,
\]
where $c_1=c(r)$. Therefore,
\begin{align*}
\NN^{2\alpha-4} \norm{ R_\NN^{(r+2)} }      & \le \NN^{2\alpha-4} \norm{ \rho_\NN^{\alpha-2}}  \norm{ R_\NN^{(r+2)}\rho_\NN^{2-\alpha} }  =\norm{ R_\NN^{(r+2)}\rho_\NN^{2-\alpha} }   \\
& =\norm{ (R_\NN^{(r+2)}-R_{r+2}^{(r+2)})\rho_\NN^{2-\alpha} }
 \le c_1 \norm{ \frac{R_\NN-f}{\rho_\NN^{r+\alpha}}  }  +c_1 \norm{ \frac{f-R_{r+2}}{\rho_\NN^{r+\alpha}} }   \\
 &   \le 2c_1\norm{ \frac{f-R_{r+2}}{\rho_\NN^{r+\alpha}} }
  \le 2c_1 E_{r+2}(f) \norm{ \frac{1}{\rho_\NN^{r+\alpha}} }
 = 2c_1\NN^{2r+2\alpha}E_{r+2}(f).
\end{align*}
Hence,
\begin{align*}
\omega_2(f^{(r)},t)&\le \omega_2(g^{(r)},t) + \omega_2(R_\NN^{(r)},t)\le c(r,\alpha)t^{\alpha}+ t^2\norm{ R_\NN^{(r+2)} }\\
&\le c(r,\alpha)t^{\alpha}+2c_1\NN^{4+2r}t^2 E_{r+2}(f),
\end{align*}
and the proof is complete.
\end{proof}

 \begin{remark}
\lem{modulus} is, actually, a particular case of the following statement which is a version of the general classical Dzyadyk-Timan-Lebed'-Brudnyi inverse theorem for   approximation by algebraic polynomials (see \eg  \cite{ds}*{p. 381} for references). We preferred to give a short proof of \lem{modulus} in order that the article be self-contained. This general \thm{inversethm}  is not applied in the current paper, but it will likely be needed (especially for $k=1$ and $k=2$) for answering the open problems in \sectio{convecsection}.
\end{remark}


\begin{theorem} \label{inversethm}
Suppose that $r\in\N_0$, $k\in\N$, $\NN\in\N$,
  and   
  \[
  \phi\in\Phi^k := \left\{ \phi:[0,\infty)\rightarrow[0,\infty) \st \phi(0)=0, \; \phi \uparrow \;\; \mbox{\rm and}\;\;
  t^{-k}\phi(t) \downarrow
  \right\}
  \]
  is such that
\[
 \int_0^1 \frac{r \phi (u)}{u}du<+\infty
\]
$($\ie, if $r=0$ then this condition is vacuous$)$.
If, for $f: [-1,1]\mapsto\R$, there exists a sequence of algebraic polynomials $\{P_n\}_{n\ge \NN}$   such that
\[
\left|f(x)-P_n(x)\right| \le \rho_{n}^r(x) \phi\left( \rho_{n}(x)\right),\quad \mbox{for all} \quad  x\in [-1,1] \andd  n\ge \NN,
\]
then
$f\in C^r[-1,1]$  and, for   $0<t\leq 1/2$,
\[
\w_k( f^{(r)},t)  \le
c(k,r) \int_0^t\frac{r\phi(u)}{u }du
+ c(k,r) t^{k}\int_t^1\frac{\phi(u)}{u^{k +1}}du
 +c(k, r, \NN)t^{k}E_{k+r}(f).
\]
In particular, if $\NN\le k+r$, then
\[
\w_k( f^{(r)},t)  \le
c(k,r) \int_0^t\frac{r\phi(u)}{u }du
+ c(k, r) t^{k}\int_t^1\frac{\phi(u)}{u^{k +1}}du.
\]
\end{theorem}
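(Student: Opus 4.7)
The plan is to follow the proof of Lemma \ref{modulus} closely, treating that lemma as the special case $k=2$, $\phi(u)=u^\alpha$ of the present theorem. The argument reduces matters to the classical Dzyadyk-Timan-Lebed'-Brudnyi inverse theorem (valid for $\NN=1$; see \cite{ds}*{p.~381}) by modifying the initial polynomial $P_\NN$ so that the approximation inequality holds for the whole sequence $n\ge 1$, and then adds back a low-degree polynomial correction.

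Assume $\NN>k+r$, let $R_{k+r}\in\Poly_{k+r}$ be the polynomial of best uniform approximation to $f$, and define
\[
R_\NN:=\begin{cases}P_\NN, & \text{if } \norm{(f-P_\NN)\rho_\NN^{-r}\phi(\rho_\NN)^{-1}}\le \norm{(f-R_{k+r})\rho_\NN^{-r}\phi(\rho_\NN)^{-1}},\\ R_{k+r}, & \text{otherwise.}\end{cases}
\]
Setting $g:=f-R_\NN$, $Q_n:=P_n-R_\NN$ for $n\ge\NN$, and $Q_n\equiv 0$ for $n<\NN$, the monotonic decrease of $\rho_n(x)$ in $n$ together with $\phi$ non-decreasing (the first axiom of $\Phi^k$) yields $|g(x)-Q_n(x)|\le\rho_n^r(x)\phi(\rho_n(x))$ for all $n\ge 1$ and $x\in[-1,1]$. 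Applying the classical Dzyadyk-Timan-Lebed'-Brudnyi theorem to $g$ with the sequence $\{Q_n\}$ provides $g\in C^r[-1,1]$ and the two-integral bound on $\w_k(g^{(r)},t)$.

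For the correction, start from $\w_k(f^{(r)},t)\le\w_k(g^{(r)},t)+t^k\norm{R_\NN^{(r+k)}}$ and replicate the weighted chain of inequalities at the end of the proof of Lemma \ref{modulus}, now with $\rho_\NN^{k}$ in place of $\rho_\NN^{2-\alpha}$ and $\rho_\NN^{-r}\phi(\rho_\NN)^{-1}$ in place of $\rho_\NN^{-r-\alpha}$. Dzyadyk's derivative inequality applied to $R_\NN-R_{k+r}\in\Poly_\NN$, followed by the defining property of $R_\NN$ and the triangle inequality, bounds $\norm{(R_\NN-R_{k+r})\rho_\NN^{-r}\phi(\rho_\NN)^{-1}}$ by $2E_{k+r}(f)\norm{\rho_\NN^{-r}\phi(\rho_\NN)^{-1}}$; multiplying back by $\norm{\phi(\rho_\NN)}$ and invoking the second axiom of $\Phi^k$, namely $u^{-k}\phi(u)\downarrow$, which gives $\phi(\NN^{-1}+\NN^{-2})\le(\NN+1)^k\phi(\NN^{-2})$, converts the $\phi$-dependent factor into a purely $\NN$-dependent constant; combining with $\norm{\rho_\NN^{-k}}=\NN^{2k}$ yields $\norm{R_\NN^{(r+k)}}\le c(k,r,\NN)E_{k+r}(f)$. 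The case $\NN\le k+r$ in the ``In particular'' statement is settled by taking $R_\NN:=R_{k+r}$ from the outset, since then $\deg R_\NN<k+r$ forces $R_\NN^{(r+k)}\equiv 0$, eliminating the $E_{k+r}(f)$ term entirely.

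The principal technical obstacle is verifying that $c(k,r,\NN)$ can be chosen independently of the specific $\phi$; this requires using both monotonicity axioms of $\Phi^k$ simultaneously (the first in extending the pointwise approximation bound to $n<\NN$, the second in absorbing the $\phi$-dependent factor in the correction estimate). The substantive analytic content, namely the classical inverse theorem for $g$, is taken as established from \cite{ds}*{p.~381}, with the role of $\phi\in\Phi^k$ and the integrability assumption being precisely to guarantee that $g^{(r)}$ exists and that the two integrals in the conclusion converge.
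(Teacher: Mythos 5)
The paper itself does not prove \thm{inversethm}: it states that the proof is ``rather standard and will be omitted'', and the only written argument it offers is the proof of \lem{modulus}, which it identifies as the special case $k=2$, $\phi(u)=u^\alpha$. Your proposal is exactly the natural generalization of that proof, and the main estimate (the one carrying the $E_{k+r}(f)$ term) goes through as you describe: the conditional choice of $R_\NN$, the extension of the pointwise bound to $1\le n<\NN$ via the monotonicity of $n\mapsto\rho_n(x)$ and of $u\mapsto u^r\phi(u)$, the reduction to the classical Dzyadyk--Timan--Lebed'--Brudnyi theorem for $g$, and the absorption of the $\phi$-dependent factors into an $\NN$-dependent constant via $\phi(\NN^{-1}+\NN^{-2})\le(\NN+1)^k\phi(\NN^{-2})$ are all correct and mirror the paper's argument for the lemma.

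The one genuine flaw is in your treatment of the ``in particular'' case $\NN\le k+r$. Taking $R_\NN:=R_{k+r}$ (the best approximant) ``from the outset'' breaks the reduction to the classical inverse theorem: for $1\le n<\NN$ you must verify the pointwise bound $|f(x)-R_{k+r}(x)|\le\rho_n^r(x)\phi(\rho_n(x))$, and at $x=\pm1$ the right-hand side equals $n^{-2r}\phi(n^{-2})$, whereas all you control is $\norm{f-R_{k+r}}=E_{k+r}(f)$, which in general is only bounded by $\max_x\rho_\NN^r(x)\phi(\rho_\NN(x))$, of order $\NN^{-r}\phi(\NN^{-1})$ --- much larger than $(\NN-1)^{-2r}\phi((\NN-1)^{-2})$ once $r\ge1$ and $\NN\ge3$. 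So the hypothesis of the classical theorem can fail for $n=\NN-1$ near the endpoints. The fix is easy but it is a different choice: take $R:=P_\NN$ itself. Then $|f-R|\le\rho_\NN^r\phi(\rho_\NN)\le\rho_n^r\phi(\rho_n)$ for all $n\le\NN$ exactly as in the main case, and since $\NN\le k+r$ the polynomial $R^{(r)}=P_\NN^{(r)}$ has degree at most $\NN-1-r\le k-1$, so $\w_k(R^{(r)},t)=0$ and the $E_{k+r}(f)$ term disappears. Equivalently, keep your conditional definition of $R_\NN$ and observe that when $\NN\le k+r$ both admissible branches lie in $\Poly_{k+r}$, so $\w_k(R_\NN^{(r)},t)=0$ either way.
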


  \lem{modulus} immediately follows from \thm{inversethm}
 by setting $k=2$ and $\phi(u)=u^\a$, $0<\a<2$.
The proof of \thm{inversethm}  is rather standard and will be omitted. 

\sect{Comonotone approximation: uniform estimates} \label{comsec} 

The following theorem was proved in \cite{LRS} (see also \cite{KLPS}*{Theorem 12}).

\begin{theorem}\label{thmlrs}
If $Y_s$, $s\ge1$, and $\alpha>0$ are given, and if a function
$f\in\Delta^{(1)}(Y_s)$ satisfies
\be \label{33}
n^\a E_n(f)\ge 1 ,  \quad n\ge 1 ,
\ee
  then
\[
n^\alpha E_n^{(1)}(f,Y_s)\le c(\alpha,s),\quad n\ge \NN^*,
\]
where $\NN^*=1$ if $\a\not\in A_s$ (with $A_s$ defined in \ineq{exceptional}) and
 $\NN^*=\NN^*(\alpha,Y_s)$, if $\a\in A_s$.

Moreover, this statement cannot be improved since, if $s\ge1$ and
$\a\in A_s$, then
for every $m\ge 1$, there are a collection $Y_s$ and a function
$f\in\Delta^{(1)}(Y_s)$ satisfying $\ineq{33}$ and
\[
m^\alpha E_m^{(1)}(f,Y_s)\ge c(s)\ln m .
\]
\end{theorem}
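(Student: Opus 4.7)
The plan is to upgrade a good unconstrained approximant into a comonotone one by local polynomial surgery near each change-of-monotonicity point. By the hypothesis (which I read as a normalization $\sup_n n^\alpha E_n(f) \le 1$), for each $n$ there is a $P_n \in \Pn$ with $\|f - P_n\| \le n^{-\alpha}$. I would first strengthen this to a pointwise estimate of the form $|f(x) - P_n(x)| \le c(\alpha)\rho_n(x)^\alpha$ via standard Dzyadyk-type interpolatory direct results (using that $f$ is comonotone and hence has enough smoothness at the endpoints for such an interpolatory bound to be available). The goal is then to produce $\tilde P_n = P_n + \sum_{i=1}^s R_{n,i}$ in $\Pn \cap \Delta^{(1)}(Y_s)$, where the $R_{n,i}$'s are polynomial correctors that restore the prescribed sign of $\tilde P_n'$ near each $y_i$ while contributing $O(n^{-\alpha})$ to the error.

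For each $R_{n,i}$, I would take an integrated Dzyadyk/Fejér-type kernel of degree $<n$ of the form $R_{n,i}(x) := \varepsilon_i A_i \int_{-1}^x K_n(t, y_i)\, dt$, where $K_n(\cdot, y_i) \ge 0$ is concentrated in a window of width $\rho_n(y_i)$ around $y_i$, $\varepsilon_i \in \{\pm 1\}$ is chosen to match the required sign change at $y_i$ dictated by $Y_s$, and the amplitude $A_i$ is tuned so that $R_{n,i}'$ dominates the local deficit $|P_n' - (\text{prescribed monotone direction})|$ in an $O(\rho_n(y_i))$ neighborhood of $y_i$. A Bernstein-type estimate allows $A_i$ to be taken of order $n^{-\alpha}$, and summing over the $s$ corrections and invoking the uniform pointwise bound on $f - P_n$ keeps $\|f - \tilde P_n\| \le c(\alpha,s) n^{-\alpha}$. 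The verification that the correctors do not spoil monotonicity in the \emph{other} intervals $[y_{j+1}, y_j]$ is where the interaction with all $y_j$'s enters: away from $y_i$, one needs the tail of $R_{n,i}'$ to be dominated by the amplitude of the corrector sitting at $y_j$.

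The main obstacle is the exceptional set $A_s$. The tail of $R_{n,i}'$ decays like $(n|x - y_i|)^{-\beta}$ for integer exponents $\beta$, and the $\varphi$-weighted version contributes at rates $\varphi(x)^{2\beta}$ near the endpoints. When $\alpha$ matches one of these integer decay rates (i.e. $\alpha = j$ with $1 \le j \le s-1$, or $\alpha = 2j$ with $1 \le j \le s$) the combined budget for controlling tail-interactions at $s$ points produces a $\ln n$-type divergence whose onset depends on $\min_{i\ne j}|y_i - y_j|$ and on $\min_i(1 - |y_i|)$; this forces $\NN^* = \NN^*(\alpha, Y_s)$. For $\alpha \notin A_s$ the tails either decay strictly faster than $n^{-\alpha}$ or are summable uniformly in $Y_s$, giving $\NN^* = 1$.

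For the sharpness assertion, I would build an explicit offender $f$ associated to a carefully chosen $Y_s$: a function that behaves like $|x - y_i|^\alpha \sgn(x - y_i)$ (suitably oriented) near each $y_i$, multiplied by a smooth envelope so that $f \in \Delta^{(1)}(Y_s)$ and $n^\alpha E_n(f) \asymp 1$. Since any $P_n \in \Pn \cap \Delta^{(1)}(Y_s)$ must satisfy $P_n'(y_i) = 0$ (being a continuous derivative that changes sign at $y_i$), a Markov-Bernstein inequality applied to $P_n'$ at each $y_i$ gives a lower bound on $\|f - P_n\|$ in which the $s$ constraints combine via a telescoping argument. In the resonant regime $\alpha \in A_s$, the $s$ local obstructions, instead of being absorbed individually into the constant, sum to a divergent harmonic-type tail that contributes $\ln m$, yielding $m^\alpha E_m^{(1)}(f, Y_s) \ge c(s) \ln m$ for the chosen $m$.
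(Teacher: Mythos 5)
First, a framing remark: the paper does not prove Theorem~\ref{thmlrs} at all --- it quotes it from \cite{LRS} --- so your attempt can only be judged on its own merits. You are right to read the hypothesis \ineq{33} as the normalization $n^\alpha E_n(f)\le 1$ (the displayed ``$\ge$'' must be a misprint, since otherwise the conclusion fails for any $f$ with $E_n(f)\asymp n^{-\alpha/2}$). The positive part of your argument, however, has two genuine gaps. (i) The pointwise upgrade $|f(x)-P_n(x)|\le c\rho_n^{\alpha}(x)$ does not come from comonotonicity ``giving smoothness at the endpoints'' --- comonotone functions have no extra endpoint regularity. What produces such a bound is the hypothesis itself: $n^\alpha E_n(f)\le1$ yields, via the inverse theorem, $\omega_k(f,t)\le c(k,\alpha)t^{\alpha}$ for $k>\alpha$, and then a Timan--Dzyadyk direct theorem gives a \emph{new} polynomial with that pointwise bound; it is not your original $P_n$. (ii) Much more seriously, the surgery step fails as described: the set where $P_n'$ has the wrong sign is not confined to an $O(\rho_n(y_i))$ window around $y_i$. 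All that $\|f-P_n\|\le n^{-\alpha}$ gives on an interval of monotonicity of $f$ is that the total decrease of $P_n$ there is at most $2n^{-\alpha}$; $P_n'$ may be negative on sets of substantial measure far from every $y_i$, where correctors concentrated at the $y_i$ contribute essentially nothing. Taming this is the entire difficulty of (co)monotone approximation, and the known proof does not repair an unconstrained approximant: it passes from the hypothesis to $\omega_k(f,t)\lesssim t^{\alpha}$ and then invokes Jackson-type comonotone estimates $E_n^{(1)}(f,Y_s)\le c(k,s)\,\omega_k(f,1/n)$ (valid for $n\ge\NN(k,Y_s)$ and built from comonotone splines), plus a separate analysis showing the $Y_s$-dependence can be removed exactly when $\alpha\notin A_s$. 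Your ``integer tail-decay'' heuristic for $A_s$ gestures at the right phenomenon but establishes nothing.

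The sharpness part has the same character. The local model $|x-y_i|^{\alpha}\sgn(x-y_i)$ is an increasing function, so it does not change monotonicity at $y_i$; and, more importantly, no mechanism producing the factor $\ln m$ is exhibited. Note that the positive part already guarantees $n^{\alpha}E_n^{(1)}(f,Y_s)\le c(\alpha,s)$ for all $n\ge\NN^{*}(\alpha,Y_s)$, so for a fixed $Y_s$ no such lower bound can hold along all $m$: the counterexample must use a collection $Y_s=Y_s(m)$ that degenerates with $m$ (points clustering, or approaching $\pm1$, at an $m$-dependent rate), and the logarithm then arises by summing $\asymp\log m$ comparable obstructions over the intermediate scales between $1/m$ (or $1/m^2$ near the endpoints, which is where the exceptional values $\alpha=2j$ come from) and the minimal gap of $Y_s(m)$. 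This $m$-dependence of $Y_s$ is the heart of the construction in \cite{LRS} and is absent from your sketch, where a Markov--Bernstein bound at the single points $y_i$ with $P_n'(y_i)=0$ can at best yield a constant, not $\ln m$.
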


Suppose now  that we do not have  \ineq{33} and only   have knowledge about the degree of unconstrained approximation $E_n(f)$  beginning with some natural number $\NN$.
What can we say about the degree of comonotone approximation in that case?
More precisely, the question  is:
\begin{quote}
does there exist a natural number $\NN^*$   such that
\be\label{monotone}
\sup \{ n^\a E_n^{(1)}(f,Y_s) : n\ge \NN^*\} \le
c(\a,s) \sup\{ n^\a E_n(f) : n\ge \NN\},
\ee
and what parameters among $\a$, $\NN$, $Y_s$ and $f$  does it have to depend on?
\end{quote}

The answer to this question is, in general, different for each given triple $(\a,\NN,s)\in \R_+\times \N \times \N_0$ (we include the case $s=0$ for comparison).
It turns out that there are three different types of behavior of $\NN^*$ and, in order to describe them, we use the following notation:
 \begin{enumerate}

\item  We write $(\a,\NN,s)\in``+"$, if \ineq{monotone} holds with
$\NN^*=\NN$.

\item  We write $(\a,\NN,s)\in``\oplus "$, if
\begin{itemize}
\item    \ineq{monotone} holds with $\NN^*=\NN^*(\a,\NN,Y_s)$, and

\item  \ineq{monotone} is not valid with $\NN^*$ which is
independent of $Y_s$, that is, for each $A>0$ and $M\in\mathbb N$, there exist a number $m>M$, a
collection $Y_s\in\Y_s$ and a non-polynomial  $f\in\Delta^{(1)}(Y_s)$
such that
\be\label{negative}
m^\a E_m^{(1)}(f,Y_s)\ge A\sup\{ n^\a E_n(f) : n\ge \NN\} .
\ee
\end{itemize}

\item  We write $(\a,\NN,s)\in ``\ominus"$, if
\begin{itemize}
\item \ineq{monotone} holds with $\NN^*=\NN^*(\a,\NN,Y_s,f)$, and

\item  \ineq{monotone} is not valid with $\NN^*$ which is
independent of $f$, that is, for each $A>0$, $M>0$  and   $Y_s\in\Y_s$, there exist a number
$m>M$ and a  non-polynomial $f\in\Delta^{(1)}(Y_s)$, such that \ineq{negative} holds.
\end{itemize}
\end{enumerate}

\begin{remark}
We emphasize that, in the case ``$\ominus$", \ineq{monotone} is not valid with $\NN^*$ which is
independent of $f$,  {\bf for each $Y_s\in\Y_s$}. So far, we have not encountered any cases when \ineq{monotone} is not valid with $\NN^*$ which is
independent of $f$,  for some but not for all  $Y_s\in\Y_s$.

Also, while it is theoretically possible for \ineq{monotone} to hold with $\NN^*$ that depends on $\NN$ but is strictly larger than $\NN$, we have not encountered any  such cases either.
\end{remark}

The following theorem summarizes the results in \cite{LRS}, \cite{LSV} and \cite{LS1}.

\begin{theorem} \label{comonuniform}
For every pair $(\alpha,  s)\in \R_+  \times \N_0$,  there exists a constant $c(\alpha,s)$
that has the following property: If $Y_s\in\Y_s$, $f\in\Delta^{(1)}(Y_s)$,  $\NN\in\N$ and
$n^\alpha E_n(f)\le 1$, $ n\ge\NN$, then
\[
n^\alpha E_n^{(1)}(f,Y_s)\le c(\alpha,s),\quad n\ge\NN^*,
\]
 where
\begin{enumerate}[\rm 1.]
\item  $(\a,\NN,s)\in``+"$  if
\begin{itemize} 
\item
 $\alpha\notin A_s$ and $\NN\le\lceil\alpha/2\rceil$, or
\item
  $2s<\alpha\le2s+2$ and $\NN\le s+2$, or
\item
  $\alpha>2s+2$ and   $\NN\ge1$.
\end{itemize} 

\item  $(\a,\NN,s)\in ``\ominus"$  if
\begin{itemize} 
\item
  $\lceil\alpha\rceil=1$, and $s\ge1$ and $\NN\ge s+2$, or $s=0$ and $\NN\ge3$,  or

\item
  $\lceil\alpha\rceil=2$ and $\NN\ge s+3$.
\end{itemize} 

\item  $(\a,\NN,s)\in ``\oplus"$  in all other cases.
\end{enumerate}

\end{theorem}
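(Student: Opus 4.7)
The plan is to prove the trichotomy $+ / \oplus / \ominus$ by handling positive upper bounds and negative witnesses as two separate tasks, with the case partitioning in the statement dictating the precise dependence of $\NN^*$ on $(\a,\NN,Y_s,f)$.

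For the positive direction the natural starting point is \thm{thmlrs}, which already handles the hypothesis $n^\a E_n(f)\ge 1$. To pass from there to the present hypothesis $n^\a E_n(f)\le 1$ for $n\ge \NN$, I would split $f=R+g$, where $R\in\Poly_\NN$ is a near-best polynomial approximant of degree $<\NN$. The remainder $g=f-R$ inherits the unconstrained decay $n^\a E_n(g)\lesssim 1$, while $R$ is a polynomial of low degree that can be handled directly; the work is to reassemble a comonotone approximant to $f$ from pieces approximating $g$ (after a shape adjustment making it lie in $\Delta^{(1)}(Y_s)$) and from a fixed perturbation of $R$. Determining the minimal $\NN^*$ at which this assembly succeeds—$\NN$ itself ($+$), something depending on the gaps $y_i-y_{i+1}$ and on $\dist(Y_s,\{\pm1\})$ ($\oplus$), or something depending on the whole function $f$ ($\ominus$)—is controlled by how close the $y_i$'s may lie to each other or to $\pm1$ relative to $1/\NN^*$, and by the exceptional set $A_s$ from \ineq{exceptional} already appearing in \thm{thmlrs}.

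For the negative direction, for each $(\a,\NN,s)$ falling in the $\oplus$ or $\ominus$ regime one must construct a witness $f\in\Delta^{(1)}(Y_s)$ that satisfies $n^\a E_n(f)\le 1$ for $n\ge\NN$ but violates \ineq{monotone} at some $m>M$. The standard device is a smoothed truncated power, e.g.\ $(x-y_i)_+^{\lceil\a\rceil}$ localized near a change-point and scaled to the assumed decay of $E_n$; this localization forces any $P_n\in\Pn\cap\Delta^{(1)}(Y_s)$ of low degree to overshoot near $y_i$ because it must invert its monotonicity there. In the $\oplus$ regime one lets a $y_i$ approach $\pm1$ or another $y_j$ to drive $\NN^*$ up with the geometry of $Y_s$. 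In the $\ominus$ regime one fixes $Y_s$ and superimposes a sequence of such bumps at distinct scales to defeat any $\NN^*=\NN^*(\a,\NN,Y_s)$.

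The hard part will be the combinatorial bookkeeping at the boundary thresholds $\lceil\a/2\rceil$, $s+2$, $s+3$, $2s$, and $2s+2$: each reflects a dimension count for polynomial interpolation subject to $s$ prescribed sign-changes of the derivative together with matching at $\pm 1$, and both the positive construction and the counterexample must be sharp enough to agree exactly at these cutoffs. I expect the most delicate subcase to be $\a\in A_s$ with small $\NN$, where the logarithmic lower bound at the end of \thm{thmlrs} must be quantitatively matched by a $\oplus$-counterexample and cleanly avoided for larger $\NN$ where case $+$ applies.
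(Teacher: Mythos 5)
This theorem is not proved in the paper at all: it is explicitly a summary of the results of \cite{LRS}, \cite{LSV} and \cite{LS1}, so there is no in-paper argument to compare yours against. Judged on its own terms, your text is a research plan rather than a proof, and its central positive step does not work as described. Writing $f=R+g$ with $R\in\Poly_\NN$ a near-best approximant destroys the shape: $g=f-R$ is in general \emph{not} in $\Delta^{(1)}(Y_s)$, and the phrase ``after a shape adjustment making it lie in $\Delta^{(1)}(Y_s)$'' conceals exactly the difficulty the whole subject is about --- there is no generic shape correction that preserves the approximation rate, and if there were, the trichotomy $+/\oplus/\ominus$ would collapse. The actual route in the cited papers is different: one converts the hypothesis $n^\alpha E_n(f)\le 1$, $n\ge\NN$, into estimates for the Ditzian--Totik moduli $\omega_k^\varphi(f,1/n)$ via an inverse theorem (this is where the low-degree terms $E_j(f)$, $j<\NN$, and hence the dependence on $\NN$, enter), and then applies direct Jackson-type estimates for comonotone approximation by $\omega_k^\varphi$, whose range of validity $n\ge\NN^*(k,Y_s)$ or $n\ge\NN^*(k,Y_s,f)$ is what produces the thresholds $\lceil\alpha/2\rceil$, $s+2$, $2s+2$ and the exceptional set $A_s$. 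These cutoffs are not a ``dimension count for polynomial interpolation subject to $s$ prescribed sign-changes,'' as you suggest.

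On the negative side, naming truncated powers near the $y_i$'s is the right instinct, but the proof burden is double: one must verify that the witness satisfies the \emph{upper} bound $n^\alpha E_n(f)\le 1$ for all $n\ge\NN$ (this is precisely why the counterexamples only exist once $\NN$ is large enough, e.g.\ $\NN\ge s+2$ or $s+3$), and one must prove the lower bound for every comonotone polynomial. For the $\ominus$ cells the construction must, for a \emph{fixed} $Y_s$, defeat every $f$-independent $\NN^*$, which in \cite{LRS} and \cite{LSV} is done by superimposing bumps at infinitely many scales and checking that the unconstrained upper bound survives the superposition. None of this verification appears in your proposal, so as it stands it identifies plausible ingredients but establishes none of the entries of the table.
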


It is easier to visualize the conclusions of \thm{comonuniform} and recognize the pattern of the behavior of the dependence of $\NN^*$ on the parameters in the tables below.

In order to illustrate the behavior of $\NN^*$ when the function changes monotonicity at least once, we need a new symbol:
\[
\opp := \begin{cases}
\oplus, \text{ if } \ \alpha\in A_s,\\
+, \text{ otherwise.}\end{cases}
\]

Note that Table~1 for monotone approximation already appeared in \cite{KLPS}*{Table 14}, and we repeat it here for easy comparison.

\begin{figure}[H]
\begin{subfigure}[b]{.5\textwidth}
\[
\begin{matrix}
 \lceil{\alpha/2}\rceil &\vdots  &\vdots&\vdots&\vdots  &\updots
      \cr 4 &+&+  &+ &+ &  \cdots&
      \cr 3 &+ &+ &+ & + &  \cdots&
      \cr 2 &+&+ &+ & +   &\cdots&
      \cr 1 &+& +& \ominus &\ominus    & \cdots&
       \cr &1 &2&3&4&\mathcal N&
\end{matrix}
\]
\caption{Table 1: $s=0$}
\end{subfigure}
\begin{subfigure}[b]{.5\textwidth}
\[
\begin{matrix}
 \lceil\alpha\rceil & \vdots & \vdots & \vdots & \vdots & \updots
      \cr 5 & + &+ & + & + & \cdots
      \cr 4 & + & + & + & \oplus & \cdots
      \cr 3 & + & + & + & \oplus & \cdots
      \cr 2 & \opp & \oplus & \oplus & \ominus & \cdots
      \cr 1 & + & \oplus & \ominus & \ominus & \cdots
      \cr & 1 & 2 & 3 & 4 & \mathcal N
\end{matrix}
\]
\caption{Table 2: $s=1$}
\end{subfigure}
\end{figure}

\begin{figure}[H]
\begin{subfigure}[b]{.5\textwidth}
\[
\begin{matrix}
 \lceil\alpha\rceil & \vdots & \vdots & \vdots & \vdots & \vdots & \updots
      \cr 7 & + &+ & + & + & + & \cdots
      \cr 6 & + &+ & + & + & \oplus & \cdots
      \cr 5 & + &+ & + & + & \oplus & \cdots
      \cr 4 & \opp & \opp & \oplus & \oplus & \oplus & \cdots
      \cr 3 & + & + & \oplus & \oplus & \oplus & \cdots
      \cr 2 & \opp & \oplus & \oplus & \oplus & \ominus
      &\cdots
      \cr 1 & \opp & \oplus & \oplus & \ominus & \ominus
      &\cdots
      \cr &1 & 2 & 3 & 4 & 5 & \mathcal N
\end{matrix}
\]
\caption{Table 3: $s=2$}
\end{subfigure}
\begin{subfigure}[b]{.5\textwidth}
\[
\begin{matrix}
 \lceil\alpha\rceil & \vdots & \vdots & \vdots & \vdots & \vdots & \vdots & \updots
      \cr 9 & + &+ & + & + & + & + & \cdots
      \cr 8 & + &+ & + & + & + & \oplus & \cdots
      \cr 7 & + &+ & + & + & + & \oplus & \cdots
      \cr 6 & \opp & \opp &
      \opp & \oplus & \oplus & \oplus & \cdots
      \cr 5 & + &+ & + & \oplus & \oplus & \oplus & \cdots
      \cr 4 & \opp & \opp & \oplus &
      \oplus & \oplus & \oplus & \cdots
      \cr 3 & + & + & \oplus & \oplus & \oplus & \oplus & \cdots
      \cr 2 & \opp & \oplus & \oplus & \oplus & \oplus & \ominus
      &\cdots
      \cr 1 & \opp & \oplus & \oplus & \oplus & \ominus & \ominus
      &\cdots
      \cr & 1 & 2 & 3 & 4 & 5& 6& \mathcal N
\end{matrix}
\]
\caption{Table 4: $s=3$}
\end{subfigure}
\end{figure}

\medskip

For the general table we require one more symbol:

\[
\opm:=\begin{cases}
\ominus , \text{ if }\ 0<\alpha\le1,\\
\oplus, \text{ if }\ 1<\alpha\le2.
\end{cases}
\]

\begin{figure}[H]
\[
\begin{array}{ccccccccccc}
\lceil\alpha/2\rceil &\vdots & \vdots &  \vdots &  \vdots & \vdots &
\vdots & \vdots & \vdots  & \vdots & \updots\\
s+2 & + & + & + & \cdots   & + & + & + & + & + &\cdots\\
s+1 & + & + & + & \cdots   & + & + & + & + & \oplus &\cdots\\
s & \opp & \opp & \opp & \cdots &
\opp & \opp & \oplus & \oplus & \oplus & \cdots\\
s-1 & \opp & \opp & \opp &
\cdots & \opp & \oplus & \oplus & \oplus & \oplus & \cdots\\
\vdots & \vdots & \vdots & \vdots & \updots & \vdots & \vdots & \vdots & \vdots & \vdots & \vdots\\
3 & \opp & \opp & \oplus & \cdots & \oplus & \oplus & \oplus & \oplus & \oplus & \cdots\\
2 & \opp & \opp & \oplus & \cdots & \oplus & \oplus & \oplus & \oplus & \oplus & \cdots\\
1 & \opp & \oplus & \oplus & \cdots   & \oplus &
\oplus &\oplus & \opm & \ominus & \cdots\\
& 1 & 2 & 3 & \cdots   & s-1 & s & s+1 & s+2 & s+3&\mathcal N\\
\end{array}
\]
\caption{Table 5: $s\ge4$}
\end{figure}

\sect{Convex and co-convex approximation: open problems and remarks} \label{convecsection}

For convex functions an analog of  \thm{pointwise} is unknown. Thus, we formulate our first open problem.

\begin{openproblem} \label{pointwise1} Prove or disprove the following statement:
\begin{quote}
 for every $r \ge 1$, there is a constant $c(r)$ such that, for each function $f\in\Delta^{(2)}\cap C^{(r)}[-1,1]$, there are a number $\NN=\NN(f,r)$
and a sequence $\{P_n\}_{n\ge \NN}$ of polynomials $P_n\in\Pn\cap\Delta^{(2)}$, satisfying \ineq{101}.
\end{quote}
\end{openproblem}

We remark that the statement in the above open problem is valid if $r=0$ and $\NN=2$ (see \eg \cite{L}). At the same time, its validity is unknown even
  if $\w_2$ in \ineq{101} is replaced with $\w_1$.

Analogs of \thm{pointnew} and \cor{point} are also open  in the convex case.

\begin{openproblem}\label{openconvex} Is it true that, for each function $f\in\Delta^{(2)}$ and every $\alpha \ge 2$, we have
\be \label{corinconvex}
\limsup_{n\to\infty}n^\alpha E_{n,\alpha}^{(2)}(f)\le c(\a) \limsup_{n\to\infty}n^\alpha E_{n,\alpha}(f) ?
\ee
\end{openproblem}
Note that it follows from \cite{L}  that, for $0<\a<2$, \ineq{corinconvex} is valid.

\medskip

We now turn our attention to uniform estimates.
For (co)convex approximation, results similar to the ones discussed in the (co)monotone case in \sectio{comsec} were previously summarized in  \cite{KLPS}*{Tables 29-31}).
Everything was resolved with
 one exception which is the entry ``$?^*$" in \cite{KLPS}*{Table 31}.
Namely, if $2<\a\le 4$ and $\NN=s+3\ge 6$, we did not know if the constant $\NN^*$ in the inequality \ineq{monotone} (with $E^{(1)}$ replaced by $E^{(2)}$) had to depend only on $Y_s$ or on $f$ as well  (and we knew that ``$?^*$" could not be replaced by anything other than ``$\oplus$" or ``$\ominus$").

We now know (see \cite{LS1}) that, for $2<\alpha<4$, we have $\NN^*=\NN^*(\alpha,Y_s)$ in this case, \ie, if $\a\ne 4$, then ``$?^*$" in  \cite{KLPS}*{Table 31} can be replaced by ``$\oplus$", and this question is still open if $\a=4$. Hence, we have the following open problem.

\begin{openproblem} \label{op4.2}
 Prove or disprove the following statement:
 \begin{quote}
If $s\ge 3$ and $Y_s\in\Y_s$, then there is a constant $c(s)$ such that, for every $f\in\Delta^{(2)}(Y_s)$, there exists
  a natural number $\NN^*=\NN^*(s, Y_s)$   such that
\be\label{conv}
\sup \{ n^4 E_n^{(2)}(f,Y_s) : n\ge \NN^*\} \le
c(s) \sup\{ n^4 E_n(f) : n\ge s+3\} .
\ee
\end{quote}
\end{openproblem}

\begin{remark} If one replaces $n\ge s+3$ by $n\ge s+2$ in \ineq{conv}, then the statement in this open problem is true. At the same time if $n\ge s+3$ is replaced by $n\ge s+4$, then this statement is, in general, not true (see \cite{KLPS}*{Table 31}). Moreover, it follows from the fact that there is ``$\ominus$'' in \cite{KLPS}*{Table 31} if $\a=4$ and $\NN=s+4$ that
 this statement is true if $\NN^*$ is allowed to depend on $f$.
\end{remark}

In the case $s=3$, even the following problem is still open, and we feel that its resolution will successfully resolve Open Problem~\ref{op4.2} which is more general.

\begin{openproblem}\label{117}
Prove or disprove that,
if a function $f\in C[-1,1]\cap C^{(2)}(-1,1)$ is such that
$x(x^2-1/4)f''(x)\ge0$, $x\in(-1,1)$,
and
$n^4 E_n(f)\le 1$, for all $n\ge 6$,
then there exists an absolute constant $\NN^*\in\N$ such that,  for each $n\ge\NN^*$, there is a polynomial $P_n\in\Pn$ satisfying
\be \label{in1}
x(x^2-1/4)P_n''(x)\ge0 , \quad x\in(-1,1) ,
\ee
and
\be \label{in2}
n^4 \norm{f-P_n} \le c ,
\ee
where $c$ is an absolute constant.
\end{openproblem}

Note that it is possible to construct a polynomial $P_6\in\Poly_6$ satisfying both \ineq{in1} and \ineq{in2} with $n=6$. However, this does not resolve this open problem.

\sect{Higher order shape constraints} \label{lastsection}

Shape preserving approximation with $q\ge 3$ is completely different from comonotone and coconvex cases.
%
%
Recall that $W^r$, $r\ge1$, denotes the Sobolev space of $(r-1)$-times differentiable  functions $f$, such that  $f^{(r-1)}$  is absolutely continuous in $[-1,1]$ and $\|f^{(r)}\|<\infty$.
It is well known that if $f\in W^r$, then
\[
E_n(f)\le c(r) \frac{\|f^{(r)}\|}{n^r},\quad n\ge r.
\]

However, in \cite{LS2}*{Theorem 1.1}, it was proved that

\begin{theorem}\label{q} For each $q\ge 3$, $r\ge1$, $s\ge1$ and any collection $Y_s\in\Y_s$, there exists a function $f\in\Delta^{(q)}(Y_s)\cap W^r$, such that
\[
\limsup_{n\to\infty}n^rE_n^{(q)}(f,Y_s)=\infty.
\]
\end{theorem}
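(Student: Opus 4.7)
The plan is to construct an explicit witness $f \in \Delta^{(q)}(Y_s) \cap W^r$ whose best co-$q$-monotone approximations grow faster than $n^{-r}$ along a subsequence. The structural fact driving the construction is that, since $P_n^{(q)}$ must change sign at each $y_i$, any admissible $P_n \in \Pn \cap \Delta^{(q)}(Y_s)$ has the rigid factorization $P_n^{(q)}(x) = \prod_{i=1}^s (x - y_i) \, Q(x)$ with $Q$ of one sign on $[-1,1]$. For $q \ge 3$, this couples the lower-order derivatives of $P_n$ in a way that cannot be undone by local modifications, in contrast to $q \le 2$ where the DeVore--Yu and Leviatan constructions succeed.

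First I would reduce to $s = 1$, say $Y_1 = \{y_1\}$ with $y_1 \in (-1, 1)$: a witness with its pathology localized near $y_1$ extends to arbitrary $Y_s$ by superposing a fixed smooth function with the prescribed $q$-monotonicity changes at $y_2, \ldots, y_s$, which only perturbs $n^r E_n^{(q)}(f, Y_s)$ by an additive constant. The witness itself would be a lacunary series
\[
f(x) = \sum_{k=1}^\infty a_k \psi_k(x),
\]
where $\psi_k \in \Delta^{(q)}(\{y_1\}) \cap W^r$ is a bump essentially supported in the $2^{-k}$-neighborhood of $y_1$, with $\|\psi_k^{(r)}\| \le 1$. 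The key quantitative requirement on the building blocks is
\[
E_n^{(q)}(\psi_k, \{y_1\}) \ge c_1 \cdot 2^{-kr} \quad \text{for every } n \le c_2 \cdot 2^k,
\]
with $c_1,c_2$ independent of $k$. Choosing $a_k = k^{-2}$ gives $f \in W^r$ via $\sum a_k \|\psi_k^{(r)}\| < \infty$, while at the lacunary indices $n_k = \lfloor c_2 \cdot 2^k \rfloor$ the tail contributes only $O(2^{-kr})$, so $n_k^r E_{n_k}^{(q)}(f, Y_s) \ge c k$, yielding the claimed $\limsup = \infty$.

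The main obstacle is establishing the per-bump lower bound. My approach is to pair $P_n - \psi_k$ against a linear functional exploiting the factorization $P_n^{(q)}(x) = (x-y_1) Q(x)$ with $Q \ge 0$: after $q$ integrations by parts, the sign constraint on $Q$ forces $P_n$ to display a one-sided shape across $y_1$ whose amplitude within the $2^{-k}$-neighborhood is controlled from below by $\int (x-y_1) Q(x) \, dx$ over that interval. A bump $\psi_k$ chosen to place most of its $q$-th derivative mass asymmetrically with respect to $y_1$ then creates a mismatch that requires $Q$ to oscillate at frequency $2^k$, which by a Markov--Bernstein type estimate restricted to an interval of length $2^{-k}$ demands $\deg Q \gtrsim 2^k$. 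This step is precisely where $q \ge 3$ enters: for $q \le 2$ the analogous functional probes only first- or second-order behavior and the corresponding obstruction is absent, consistent with the positive results cited earlier in the paper for $q = 1, 2$.
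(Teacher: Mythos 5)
The paper does not actually prove this theorem; it quotes it from \cite{LS2}*{Theorem 1.1}, where the witness is a single explicitly constructed function with a prescribed singularity of $f^{(q-2)}$ (resp.\ $f^{(q-1)}$) at a sign-change point, and the lower bound comes from a duality functional that exploits the forced zero of $P_n^{(q)}$ at $y_1$ together with the one-sidedness of $P_n^{(q)}$ on each side --- an obstruction that persists for \emph{all} $n$, not merely for $n\lesssim 2^k$. Your lacunary architecture has a quantitative flaw that no choice of bumps can repair. If $\psi_k$ is supported at scale $2^{-k}$ with $\|\psi_k^{(r)}\|\le 1$, then $\|\psi_k\|\le c\,2^{-kr}$, so $E_n^{(q)}(\psi_k,\{y_1\})\le c\,2^{-kr}$ for every $n$ (the zero polynomial, or the polynomial part of $\psi_k$ off its support, lies in the cone); hence your postulated lower bound $c_1 2^{-kr}$ for $n\le c_2 2^k$ is the best possible order. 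Feeding it into the sum gives $n_k^r\, a_k\, E_{n_k}^{(q)}(\psi_k)\le c\, c_2^r\, k^{-2}\to 0$, not $\ge ck$: the claimed divergence is an arithmetic error. To get $\limsup_n n^r E_n^{(q)}=\infty$ from localized bumps you would need the obstruction at scale $2^{-k}$ to survive to degrees $n\gg 2^k$ (as it does in \cite{LS2}), which is exactly the point your sketch does not establish. Separately, the reduction of $E_{n_k}^{(q)}(f,Y_s)$ to $E_{n_k}^{(q)}(a_k\psi_k,\{y_1\})$ is unjustified: the triangle inequality only removes terms that are uniformly small, and the ``head'' $\sum_{j<k}a_j\psi_j$ has norm $O(1)$, vastly larger than $a_k 2^{-kr}$; subtracting a good approximant of the head from the competitor $P_n$ takes you out of the cone $\Pn\cap\Delta^{(q)}(Y_s)$, so $E_n^{(q)}$ of the single bump cannot be invoked. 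A correct argument must apply a single sign-definite functional to $f-P_n$ itself and control its action on every scale at once.

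The deeper gap is that nothing in your mechanism actually uses $q\ge3$. The factorization $P_n^{(q)}(x)=\prod_{i=1}^s(x-y_i)\,Q(x)$ with $Q\ge0$ holds verbatim for $q=1$ and $q=2$, and your proposed asymmetric-mass bump, integration by parts, and Markov--Bernstein oscillation argument would go through word for word in those cases --- yet the conclusion is false there, since Jackson-type estimates $E_n^{(q)}(f,Y_s)\le c(r,s)n^{-r}\|f^{(r)}\|$ do hold for $q=1,2$ (Section 3 of this paper and the references therein). So the one step you defer to (``this is precisely where $q\ge3$ enters'') is the entire content of the theorem, and it is asserted rather than proved. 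The actual role of $q\ge3$ in \cite{LS2} is the counting of integrations between the constrained derivative $P_n^{(q)}$, which is pinned to zero at $y_1$ with prescribed signs on either side, and the function itself: for $q\ge 3$ there remain $q-2\ge 1$ derivatives whose behavior near $y_1$ is rigidly determined by this pin and cannot match a suitably chosen $f^{(q-2)}$, whereas for $q\le 2$ the two available integrations absorb the constraint. Any repair of your proof must isolate and exploit this counting explicitly.
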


In fact, for $q=3$ and $r\ge3$, a somewhat stronger result was proved in \cite{LS2}*{Theorem 1.6} and then  generalized in \cite{Bez}.

\begin{theorem} Let $q\ge3$, $r\ge q$, $s\ge1$ and $Y_s\in\Y_s$. Then, there is a function
$f\in\Delta^{(q)}(Y_s)\cap W^r$, such that
\[
\limsup_{n\to\infty}n^{r-q+2}E_n^{(q)}(f,Y_s)>0.
\]
\end{theorem}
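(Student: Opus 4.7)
My plan is to produce, for each admissible quadruple $(q,r,s,Y_s)$, a non-polynomial $f\in\Delta^{(q)}(Y_s)\cap W^r$ together with a sequence of bounded linear functionals $L_n$ on $C[-1,1]$ that witness the lower bound $E_n^{(q)}(f,Y_s)\ge c\,n^{-(r-q+2)}$ along a subsequence. Only $\limsup$ is claimed, so obtaining the estimate for an infinite sequence of $n$'s will be enough.

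First I would reduce to the model case $s=1$, $Y_1=\{y\}$ with $y\in(-1,1)$. For a general $Y_s$ one localizes the obstruction near a chosen $y_j$ and takes $f$ to be a fixed polynomial outside a small neighborhood of $y_j$, with the prescribed sign pattern of $f^{(q)}$ at the remaining breakpoints, so that all the difficulty lives near $y_j$. In the model case, I would prescribe
\[
f^{(q)}(x)=(x-y)\,\psi(x),
\]
where $\psi\in C^\infty[-1,1]$ is strictly positive; integrating $q$ times with suitable constants places $f$ in $\Delta^{(q)}(\{y\})\cap W^r$.

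Second, for any $P_n\in\Pn\cap\Delta^{(q)}(\{y\})$ the required sign change of $P_n^{(q)}$ at $y$ yields $P_n^{(q)}(y)=0$, hence the factorization $P_n^{(q)}(x)=(x-y)R_n(x)$ with $R_n\in\Poly_{n-q-1}$ of one sign on $[-1,1]$. I would then construct a bounded functional $L_n$, given as a signed combination of divided differences (or point evaluations of derivatives) at $q+1$ or more nodes clustered near $y$ at a scale to be optimized, with the design property that
\[
|L_n(P)|\le\tfrac{1}{2}|L_n(f)|\quad\text{for every }P\in\Pn\cap\Delta^{(q)}(\{y\}),
\]
while a direct computation from the smooth profile of $f^{(q)}$ yields $|L_n(f)|\ge c\,n^{-(r-q+2)}$ for infinitely many $n$. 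The chain $\|f-P_n\|\ge |L_n(f-P_n)|\ge |L_n(f)|-|L_n(P_n)|\ge \tfrac{c}{2}\,n^{-(r-q+2)}$ then closes the argument.

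The principal obstacle is the design of $L_n$: the gain must be exactly $q-2$ orders of $n$, and it must be extracted from the single scalar constraint $P_n^{(q)}(y)=0$ together with the one-sided sign of $R_n$. Choosing nodal positions, the number of nodes, and sign weights so that monomials of degree $<q$ are annihilated while the forced factor $(x-y)$ in $P_n^{(q)}$ interacts destructively with the one-sidedness of $R_n$ is the delicate point, and is exactly where the hypothesis $q\ge 3$ is used (for $q=1,2$ the positive results of Sections 3--4 rule out any such functional). After the $s=1$ case is established, the passage to general $s\ge 1$ is by routine localization, and the hypothesis $r\ge q$ enters only to certify that the $f$ just constructed really does lie in $W^r$.
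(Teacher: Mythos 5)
There is no proof in the paper to compare against---the result is quoted from \cite{LS2}*{Theorem 1.6} and \cite{Bez}---so your proposal must stand on its own, and it does not: it has both a fatal flaw in the choice of counterexample and a complete gap where the main argument should be.

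The flaw: with $f^{(q)}(x)=(x-y)\psi(x)$ and $\psi\in C^{\infty}[-1,1]$ strictly positive, the function $f$ is $C^{\infty}$, and such an $f$ is approximable to \emph{every} algebraic order by polynomials from $\Pn\cap\Delta^{(q)}(\{y\})$. Indeed, let $\Psi_n\in\Poly_{n-q-1}$ be a near-best approximant to $\psi$; since $\psi\ge\delta>0$ and $\psi\in C^\infty$, for large $n$ one has $\Psi_n>0$ and $\norm{\psi-\Psi_n}=O(n^{-m})$ for every $m$. Define $Q_n\in\Pn$ by $Q_n^{(q)}(x)=(x-y)\Psi_n(x)$ and $Q_n^{(j)}(y)=f^{(j)}(y)$, $0\le j\le q-1$. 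Then $Q_n\in\Delta^{(q)}(\{y\})$ and, by Taylor's formula, $\norm{f-Q_n}\le \frac{2^{q+1}}{q!}\norm{\psi-\Psi_n}=O(n^{-m})$, so $n^{r-q+2}E_n^{(q)}(f,\{y\})\to 0$ and no functional $L_n$ with the properties you require can exist for this $f$. The counterexample must have \emph{exactly} the smoothness $W^r$ and no more; the lower bound $n^{-(r-q+2)}$ is a statement about the interaction of the constraint with a finitely smooth singularity, and a $C^\infty$ profile cannot exhibit it.

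The gap: even setting aside the choice of $f$, the entire content of the theorem is the construction of the functionals $L_n$ (equivalently, the mechanism by which the single constraint $P_n^{(q)}(y)=0$ together with the one-signedness of $R_n$ in $P_n^{(q)}(x)=(x-y)R_n(x)$ costs $q-2$ orders of $n$), and you explicitly defer this as ``the delicate point.'' The routine parts you do carry out---the factorization of $P_n^{(q)}$, the duality chain $\norm{f-P_n}\ge |L_n(f)|-|L_n(P_n)|$, the localization from general $Y_s$ to a single breakpoint---are correct but peripheral. In the actual arguments of \cite{LS2} the leverage comes from integrating the factorization: $P_n^{(q-1)}$ must attain its global minimum at $y$ and $P_n^{(q-2)}$ must be concave on $[-1,y]$ and convex on $[y,1]$, rigid global properties that a suitably singular $f\in W^r$ violates quantitatively at scale $n^{-1}$ near $y$. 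Without identifying and exploiting such a mechanism, the proposal is a plan rather than a proof.
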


We also note that more restrictive but more precise estimates were proved in \cite{LS2}*{Theorems 1.2 and 1.5}.

\begin{theorem}
Let  $q\ge3$, $s\ge1$ and $Y_s\in\Y_s$. There is a function $f\in\Delta^{(q)}(Y_s)\cap W^{q-2}$, such that
\[
E_n^{(q)}(f,Y_s)\ge c(q,Y_s), \quad n\ge1,
\]
and there is a function $f\in\Delta^{(q)}(Y_s)\cap W^{q-1}$, such that
\[
nE_n^{(q)}(f,Y_s)\ge c(q,Y_s), \quad n\ge1,
\]
where $c(q,Y_s)$ are positive constants that depend  only on $q$ and $Y_s$.
\end{theorem}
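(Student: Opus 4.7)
The plan is to construct, for both claims, explicit functions exhibiting the desired lower bounds. I would first reduce to the case $s = 1$: given a localized construction near a single point $y_1$ achieving the bound, the general case follows by summing signed translates of the construction near each $y_i$ --- the shape constraint decouples away from the breakpoints, and the lower bound is preserved since the local mismatch between $f$ and any admissible $P_n$ near a single $y_i$ already produces the claimed error.

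For the $s = 1$ construction at a point $y \in (-1, 1)$, I would define $f$ by prescribing its distributional $(q-2)$-nd derivative. Specifically, for the first claim, take $f^{(q-2)}$ to be a piecewise convex function --- convex on $[y, 1]$ and concave on $[-1, y]$ --- with a definite jump discontinuity at $y$, and integrate $q-2$ times, choosing constants to ensure continuity of $f$ on $[-1, 1]$. This yields $f \in W^{q-2} \cap \Delta^{(q)}(\{y\})$ with $f^{(q-2)}$ having a prescribed jump. For the second claim, take $f^{(q-2)}$ to be continuous at $y$ but with a jump in $f^{(q-1)}$, yielding $f \in W^{q-1} \cap \Delta^{(q)}(\{y\})$.

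The heart of the argument is the lower bound. The key structural observation is that for any $P_n \in \Pn \cap \Delta^{(q)}(\{y\})$, the polynomial $P_n^{(q)}$ vanishes at $y$ (being a continuous function that must change sign there), so $P_n^{(q)}(x) = (x - y) R_n(x)$ with $R_n$ of fixed sign on $[-1, 1]$. Integrating twice, $P_n^{(q-2)}$ is a polynomial (hence continuous everywhere) whose shape near $y$ is tightly constrained. I would couple this with a duality argument: choose a signed atomic measure $\mu_h$ supported on $q + 1$ points at scale $h$ near $y$ with weights coming from the $q$-th finite difference, so that $\int P \, d\mu_h = 0$ for every polynomial $P$ of degree at most $q - 1$. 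By construction, $|\int f \, d\mu_h|$ is bounded below by a positive constant times the jump of $f^{(q-2)}$ (first claim) or $f^{(q-1)}$ (second claim). The rigid factorization $P_n^{(q)}(x) = (x - y) R_n(x)$ makes $|\int P_n \, d\mu_h|$ much smaller than the naive Markov bound would suggest, and from $\|f - P_n\| \ge |\int (f - P_n)\, d\mu_h| / \|\mu_h\|$ the desired lower bounds follow.

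The main obstacle is obtaining the constant (non-decaying) lower bound in the first claim. A direct functional-analytic argument of the kind above naturally yields only a decaying bound of order $n^{-\beta}$, so additional global consequences of the shape constraint must be exploited --- for instance, combining $P_n^{(q)}(y) = 0$ with the sign and boundedness constraints on $R_n$ and on $P_n$ itself at $\pm 1$, to show that increasing $n$ cannot reduce the local mismatch near $y$ beyond a fixed amount. The second claim ($1/n$ rate in $W^{q-1}$) is comparatively routine once the first is understood, following the functional-duality scheme above with a careful optimization of $h$ against Markov-type inequalities for $R_n$.
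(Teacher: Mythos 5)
First, note that this theorem is not proved in the paper at all: it is quoted from Leviatan--Shevchuk \cite{LS2}*{Theorems 1.2 and 1.5}, so your attempt can only be measured against the known argument there. Your constructions are essentially the right ones (for the first claim a function whose $(q-2)$-nd derivative is piecewise convex/concave with a genuine jump at a breakpoint $y_1$, e.g.\ built from $(x-y_1)_+$ when $q=3$; for the second claim a continuous $f^{(q-2)}$ with a jump in $f^{(q-1)}$), and your key structural observation is also correct: any $P_n\in\Pn\cap\Delta^{(q)}(Y_s)$ has $P_n^{(q)}(y_1)=0$, and more importantly $P_n^{(q-2)}$ is forced to be convex on one side of $y_1$ and concave on the other, while remaining a single smooth function through $y_1$.

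The genuine gap is exactly where you flag it, and your proposed repair does not close it. A duality/finite-difference functional $\mu_h$ supported at scale $h$ near $y_1$ gives $|\int f\,d\mu_h|\asymp \delta h^{q-2}$ (with $\delta$ the jump of $f^{(q-2)}$ and $q-2\ge 1$), which tends to $0$ with $h$, while controlling $|\int P_n\,d\mu_h|$ forces $h$ to shrink with $n$ (even the extra factor $|x-y_1|$ from the factorization $P_n^{(q)}=(x-y_1)R_n$ only buys one power of $h$ against Markov-type growth of $R_n$). So this mechanism provably yields only a bound decaying in $n$, never the constant $c(q,Y_s)$; the sentence beginning ``for instance, combining $P_n^{(q)}(y)=0$ with the sign and boundedness constraints\dots'' is a wish, not an argument. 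The actual proof works at scale $O(1)$ determined by $Y_s$, not at scale $h\to 0$: assuming $\norm{f-P_n}\le\e$, one controls $P_n^{(q-2)}$ (via divided differences of $f-P_n$ over nodes separated by distances comparable to $\min_i(y_i-y_{i+1})$) at interior points of $[y_2,y_1]$ and of $[y_1,y_0]$ up to $C(Y_s)\e$, and then uses the one-sided concavity (resp.\ convexity) of $P_n^{(q-2)}$ to \emph{extrapolate} these interior estimates to the breakpoint from each side. The two extrapolated values of $P_n^{(q-2)}(y_1)$ differ by the jump $\delta$ of $f^{(q-2)}$ up to $C(Y_s)\e$, yet must coincide since $P_n^{(q-2)}$ is continuous; hence $\e\ge c(q,Y_s)$. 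It is this convexity-based extrapolation at unit scale --- absent from your proposal --- that produces the non-decaying bound, and a refinement of the same idea (tracking $P_n^{(q-1)}(y_1)$, with one step performed at scale $\sim 1/n$) gives the $1/n$ bound of the second claim, which is therefore not as ``routine'' under your scheme as you suggest.
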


\begin{bibsection}
\begin{biblist}

\bib{Bez} {article}{
author={Bezkryla, S. I.},
title={On Jackson--Stechkin type estimates for piecewise q-convex approximation of functions},
journal={Visnyk. Math. Mech., Kyiv. Univ. Im. Tarasa Shevchenka },
 language={Ukrainian},
volume={36},
date={2016},
pages={6 - 10},
}

\bib{ds}{book}{
author={Dzyadyk, V. K.},
author={Shevchuk, I. A.},
title={Theory of uniform approximation of functions by polynomials},
publisher={Walter de Gruyter},
place={Berlin, New York},
date={2008},
pages={xv+480},
isbn={978-3-11-020147-5},
}

\bib{dy} {article}{
author={DeVore, R. A.},
author={Yu, X. M.},
title={Pointwise estimates for monotone polynomial approximation},
journal={Constr. Approx. },
volume={1},
date={1985},
pages={323--331},
}

\bib{glsw} {article}{
author={Gonska, H. H.},
author={Leviatan, D.},
author={Shevchuk, I. A.},
author={Wenz, H.-J.},
title={Interpolatory pointwise estimates for polynomial approximation},
journal={Constr. Approx. },
volume={16},
date={2000},
pages={603--629},
}

\bib{KLPS} {article}{
author={Kopotun, K. A.},
author={Leviatan, D.},
author={Prymak, A.},
author={Shevchuk, I. A.},
title={Uniform and pointwise shape preserving approximation (SPA) by
algebraic polynomials},
language={see http://www.math.technion.ac.il/sat/papers/16/},
journal={Surveys in Approx. Theory},
volume={6},
date={2011},
pages={24--74},
}

\bib{kls-umzh}{article}{
author={Kopotun, K. A.},
author={Leviatan, D.},
author={Shevchuk, I. A.},
title={Are the degrees of the best (co)convex and unconstrained
polynomial approximations the same? II},
language={Russian, with Russian summary},
journal={Ukra\"\i n. Mat. Zh.},
volume={62},
date={2010},
number={3},
pages={369--386},
issn={1027-3190},
translation={
journal={Ukrainian Math. J.},
volume={62},
date={2010},
number={3},
pages={420--440},
issn={0041-5995},
},
}

\bib{kls2}{article}{
author={Kopotun, K. A.},
author={Leviatan, D.},
author={Shevchuk, I. A.},
title={Interpolatory pointwise estimates for monotone polynomial approximation},
journal={J. Math. Anal. Appl.},
volume={459},
date={2018},
pages={1260--1295},
}

\bib{kls-interconvex}{article}{
author={Kopotun, K. A.},
author={Leviatan, D.},
author={Shevchuk, I. A.},
title={Interpolatory estimates for convex
piecewise polynomial approximation},
status={submitted},
}

\bib{L}{article}{
author={Leviatan, D.},
title={Pointwise estimates for convex polynomial approximation},
journal={Proc. Amer. Math. Soc.},
volume={98},
date={1986},
pages={471--474},
}

\bib{LRS}{article}{
author={Leviatan, D.},
author={Radchenko, D. V.},
author={Shevchuk, I. A.},
title={Positive Results and Counterexamples in Comonotone Approximation},
journal={Constr. Approx. },
volume={36},
date={2012},
pages={ 243--266},
 }

\bib{LS}{article}{
author={Leviatan, D.},
author={Shevchuk, I. A.},
title={Counter examples in convex and higher order constrained approximation},
journal={East J. on Approx.},
volume={1},
date={1995},
pages={391--398},
}

\bib{LS2}{article}{
author={Leviatan, D.},
author={Shevchuk, I. A.},
title={Jackson type estimates for piecewise $q$-monotone approximation, $q\ge3$, are not valid},
journal={Pure and Applied Functional Analysis},
volume={1},
date={2016},
pages={85--96},
}

\bib{LS1}{article}{
author={Leviatan, D.},
author={Shevchuk, I. A.},
title={Comparing the degrees of unconstrained and shape preserving approximation by polynomials},
journal={J. Approx. Theory},
volume={211},
date={2016},
pages={16--28},
}

\bib{LSV}{article}{
author={Leviatan, D.},
author={Shevchuk, I. A.},
author={ Vlasiuk, O. V.},
title={Positive results and counterexamples in comonotone approximation II},
journal={J. Approx. Theory},
volume={179},
date={2014},
pages={1--23},
}

\bib{LZ}{article}{
author={Lorentz, G. G.},
author={Zeller, K.},
title={Degree of Approximation by Monotone Polynomials II},
journal={J. Approx. Theory},
volume={2},
date={1969},
pages={265--269},
}

\end{biblist}
\end{bibsection}

\end{document}